\documentclass[11pt]{amsart}

\usepackage{lmodern}
\usepackage{microtype}
\usepackage[english]{babel}
\usepackage{mathtools}
\usepackage{hyperref}
\usepackage[hyperpageref]{backref}
\usepackage[msc-links]{amsrefs}

\theoremstyle{plain} 
\newtheorem{theorem}{Theorem}[section]
\newtheorem{lemma}[theorem]{Lemma}
\newtheorem{corollary}[theorem]{Corollary}

\theoremstyle{remark}
\newtheorem*{remark}{Remark}

\numberwithin{equation}{section}

\DeclareMathOperator{\mre}{Re} 
\newcommand{\Q}{\mathcal{Q}}

\title[Contractivity of Rudin's diagonal restriction operator]{Contractivity of Rudin's diagonal restriction operator for $H^p$ of the polydisc}

\date{\today} 

\author[Brevig]{Ole Fredrik Brevig} 
\address{Department of Mathematical Sciences, Norwegian University of Science and Technology (NTNU), 7491 Trondheim, Norway} 
\email{ole.brevig@ntnu.no}
\author[G\'omez-Serrano]{Javier G\'omez-Serrano}\address{Department of Mathematics, Brown University, 02912 Providence, Rhode Island, USA} 
\email{javier\_gomez\_serrano@brown.edu}
\author[Ortega-Cerd\`{a}]{Joaquim Ortega-Cerd\`{a}}\address{Departament de Matem\`{a}tiques i Inform\`{a}tica, Universitat de Barcelona and Centre de Recerca Matem\`{a}tica, 08007 Barcelona, Spain}
\email{jortega@ub.edu}
\author[Seip]{Kristian Seip}\address{Department of Mathematical Sciences, Norwegian University of Science and Technology (NTNU), 7491 Trondheim, Norway} 
\email{kristian.seip@ntnu.no}

\begin{document} 

\begin{abstract}
	We establish that Rudin's diagonal restriction operator is contractive from the Hardy space $H^p$ of the bidisc to the Bergman space $A^p$ of the disc for $2\leq p<\infty$, and characterize all cases of equality. We extend the result to polydiscs and obtain applications to polynomial point evaluation among others. The proof relies on an explicit formula, deduced from certain Green's formulas, for the defect between the $p$th powers of the two $L^p$ norms.
\end{abstract}

\maketitle

\section{Introduction}
Let $m$ stand for the normalized Lebesgue measure on the unit circle $\mathbb{T}$ and $m_2 \coloneq  m \times m$ the corresponding product measure on $\mathbb{T}^2$. The Hardy space $H^p(\mathbb{D}^2)$ consists of analytic functions in the unit bidisc $\mathbb{D}^2$ that satisfy
\[\|f\|_{H^p(\mathbb{D}^2)}^p \coloneq \sup_{0<r<1} \int_{\mathbb{T}^2} \lvert f_r(z) \rvert^p\,dm_2(z) < \infty,\]
where $f_r$ denotes the dilation $f_r(z) \coloneq  f(rz_1,rz_2)$. The Bergman space $A^p(\mathbb{D})$ consists of the analytic functions in the unit disc $\mathbb{D}$ such that
\[\|g\|_{A^p(\mathbb{D})}^p \coloneq  \int_{\mathbb{D}} \lvert g(w) \rvert^p \,d\mu(w) < \infty,\]
where $\mu$ is the normalized Lebesgue area measure on $\mathbb{D}$. Let $Df(w) \coloneq f(w,w)$ denote the diagonal restriction operator. Our main result reads as follows.

\begin{theorem} \label{thm:d2}
	Suppose that $2<p<\infty$. We have
	\begin{equation} \label{eq:mainbound} \|Df\|_{A^p(\mathbb{D})} \leq \|f\|_{H^p(\mathbb{D}^2)}\end{equation}
	for every $f$ in $H^p(\mathbb{D}^2)$, and equality holds in this bound if and only if either $f \equiv 0$ or 
	\[f(z_1,z_2) = \left(\frac{a(z_1)-a(z_2)}{z_1-z_2}\right)^{2/p},\]
	where $a$ is a univalent function in $\mathbb{D}$ such that $a(\mathbb D)$ has finite area.
\end{theorem}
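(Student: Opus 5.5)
The plan is to reduce the inequality to an exact identity for the defect
\[\Delta_p(f)\coloneq\|f\|_{H^p(\mathbb{D}^2)}^p-\|Df\|_{A^p(\mathbb{D})}^p,\]
in the spirit of the abstract, and to show that it is a sum of manifestly nonnegative terms when $p\geq 2$. By dilation and the monotone convergence of $\int_{\mathbb{T}^2}|f_r|^p\,dm_2$, it suffices to treat $f$ analytic in a neighbourhood of $\overline{\mathbb{D}^2}$; the general case follows by letting $r\to1$, with some care needed for the equality statement.

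The first step is the model case $p=2$. With $f=\sum c_{jk}z_1^jz_2^k$ we have $Df(w)=\sum_n \bigl(\sum_{j+k=n}c_{jk}\bigr)w^n$ and $\|w^n\|_{A^2}^2=1/(n+1)$. Cauchy--Schwarz on each antidiagonal therefore gives
\[\Delta_2(f)=\sum_n\frac{1}{n+1}\sum_{j<k,\,j+k=n}|c_{jk}-c_{kj'}|^2\text{-type sums}\geq 0,\]
with equality iff $c_{jk}$ depends only on $j+k$. That condition says exactly that $f(z_1,z_2)=(a(z_1)-a(z_2))/(z_1-z_2)$ with $a'=Df$. I want to rewrite this coefficient identity as an integral identity that never uses orthogonality. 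To do this I pass to the coordinates $(z_1,z_2)\mapsto(w,\zeta)$ adapted to the diagonal. I then apply Green's formula $\int_{\mathbb{T}}u\,dm-u(0)=\int_{\mathbb{D}}\Delta u(z)\log\frac{1}{|z|}\,\frac{d\mu(z)}{2}$ first in one variable and then in the other, to the function $u=|F|^2$. The aim is to express $\Delta_2$ as an integral over $\mathbb{D}^2$ (or $\mathbb{D}\times\mathbb{T}$) of a positive kernel times $|(\text{derivative of }F\text{ transverse to the diagonal})|^2$, plus a boundary contribution. This is the ``Green's formula'' representation, and it should be valid for any smooth $u$, not only for $|F|^2$ with $F$ analytic.

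The second step applies the same identity to $u=|f|^p$. Locally, where $f\neq0$, $|f|^p=|F|^2$ with $F=f^{p/2}$. Plurisubharmonic computations then give $\partial\bar\partial|f|^p=\tfrac{p^2}{4}|f|^{p-2}|\partial f|^2$, and the mixed terms in the identity produce expressions of the form
\[\tfrac{p^2}{4}|f|^{p-2}\bigl|\partial_1f-\partial_2 f\bigr|^2\]
together with correction terms weighted by $|f|^{p-2}$. The zeros of $f$ form an analytic set of measure zero and can be ignored. The key claim to verify is that the leftover terms combine, via an integration by parts along the slices, into something with a factor like $(p-2)$ times a nonnegative quantity. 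This would explain why the restriction $p\geq2$ is needed: the inequality should fail for $p<2$ by testing near-equality functions.

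The main obstacle is precisely this sign analysis: finding the right grouping so that every term in $\Delta_p(f)$ is visibly nonnegative for $p\geq 2$. I would try first to write $\Delta_p(f)=\Delta_2$-type expression applied to $F=f^{p/2}$ on each one-variable slice where $F$ is single-valued, plus a term involving $\Delta(|f|^p)-\frac{p^2}{4}|\partial(f^{p/2})|^2$ type differences. For $p>2$, equality then forces two things. First, the transverse derivative of $f^{p/2}$ vanishes in the sense of the $p=2$ case, which makes $f^{p/2}$ a divided difference $(a(z_1)-a(z_2))/(z_1-z_2)$. Second, the $(p-2)$ term vanishes. I expect this second condition to force $f^{p/2}$ to be zero-free and single-valued, which corresponds to $a$ being univalent, since the divided difference of $a$ is nonvanishing on $\mathbb{D}^2$ iff $a$ is injective. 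Finiteness of $\|Df\|_{A^p}^p=\int_{\mathbb{D}}|a'|^2\,d\mu$ is the finite-area condition. Conversely, for such $a$, one checks directly that both sides equal the normalized area of $a(\mathbb{D})$. On the $H^p$ side this uses the $p=2$ identity for $F$, where $\|F\|_{H^2}^2=\sum(n+1)|\hat a(n+1)|^2$.
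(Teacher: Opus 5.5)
Your proposal stops where the proof begins. The defect identity for general $u$, its sign, and the equality analysis are all left as ``the key claim to verify'' or ``I expect''. Moreover, the form you guess for the identity is wrong. The quantity whose vanishing characterizes the $p=2$ extremals cannot be a first-order derivative of $F$, because no first-order operator annihilates all divided differences. For example, $z_1^2+z_1z_2+z_2^2$ (the divided difference of $w^3$) is extremal, yet $\partial_1-\partial_2$ maps it to $z_1-z_2$. Conversely, $(z_1+z_2)^2$ is killed by $\partial_1-\partial_2$ but is not extremal. Your coefficient formula is also off: the Cauchy--Schwarz defect on the $n$th antidiagonal is $\frac{1}{2(n+1)}\sum_{\lvert\alpha\rvert=\lvert\beta\rvert=n}\lvert c_\alpha-c_\beta\rvert^2$, summed over all pairs, not only transposed ones. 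The missing idea is the second-order operator $\mathcal{L}u=\partial_1\partial_2\big((z_1-z_2)u\big)$, whose analytic kernel is exactly the divided differences, together with the identity
\[\int_{\mathbb{T}^2}u\,dm_2=\int_{\mathbb{D}}Du\,d\mu-2\int_{\mathbb{D}^2}G(z_1,z_2)\,\mathcal{L}\overline{\mathcal{L}}u\,d\mu(z_1)\,d\mu(z_2)\]
for $u\in C^4$, where $G$ is the Green's function of the disc. It follows from the local Douglas formula $\int_{\mathbb{T}}\varphi\,dm=\frac14\int_{\mathbb{D}}P(w,\zeta)\,\Delta\big(\lvert w-\zeta\rvert^2\varphi(w)\big)\,d\mu(w)$ in one variable, then Green's representation formula in the other, using $16\,\mathcal{L}\overline{\mathcal{L}}u=\Delta_1\Delta_2(\lvert z_1-z_2\rvert^2u)$.

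Your observation that $\lvert f\rvert^p=\lvert f^{p/2}\rvert^2$ locally off the zero set is the right instinct. With the correct identity it gives $\mathcal{L}\overline{\mathcal{L}}\lvert f\rvert^p=\lvert\mathcal{L}(f^{p/2})\rvert^2\geq0$ there with no further work. The whole difficulty then sits on $\{f=0\}$, which you dismiss as having measure zero, and that is not a valid justification. The identity involves a fourth-order operator, $\lvert f\rvert^p$ is not $C^4$ at zeros of $f$, and a possible singular contribution on that hypersurface must be ruled out or signed. The paper regularizes with $u=(\lvert f_r\rvert^2+\varepsilon)^{p/2}$ and proves $\mathcal{L}\overline{\mathcal{L}}u\geq0$ pointwise. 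It writes this as a Hermitian form in $\mathcal{L}f$ and $(z_1-z_2)\partial_1f\,\partial_2f$ whose determinant is $q^2(q-1)\varepsilon(x+\varepsilon)^{2q-6}(2\varepsilon^2+2q\varepsilon x+q(q-1)x^2)$ at $x=\lvert f\rvert^2$, with $q=p/2$. This is where $p>2$ enters, not through an integration by parts along slices. Likewise, equality does not hand you a vanishing ``$(p-2)$-term'' that forces zero-freeness. After $\varepsilon\to0^+$, $r\to1^-$ and Fatou's lemma, you only get $\mathcal{L}(f^{p/2})=0$ on $\{f\neq0\}$, and a separate rigidity argument is needed. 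Setting $g=f+q(z_1-z_2)\partial_1f$, the condition becomes $f\,\partial_2g+(q-1)\,g\,\partial_2f\equiv0$ on $\mathbb{D}^2$, so $\lvert f\rvert^{q-1}\lvert g\rvert$ is constant on each slice $w\mapsto(z_1,w)$. A zero of $f$ at $(z_1,z_2)$ then forces $Df(z_1)=0$ and, symmetrically, $Df(z_2)=0$. Since $Df\not\equiv0$ at equality, the zeros of $f$ would be isolated, which is impossible in $\mathbb{D}^2$. Only after this does $f^{p/2}$ reduce to the $p=2$ case with $a$ univalent.
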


The same result for $p=2$ is essentially due to Rudin~\cite{Rudin1969}*{Exercise~3.4.4(e)}, although in this case the assumption on $a$ is merely that $a'$ belongs to $A^2(\mathbb{D})$. If $p=2k$ for a positive integer $k$, then the estimate $\|Df\|_{A^p(\mathbb{D})} \leq \|f\|_{H^p(\mathbb{D}^2)}$ follows directly from the case $p=2$ by the power trick. We will reprove Rudin's result using our methods (see Theorem~\ref{thm:rudin}).

Duren--Shields~\cite{DS1975} and Horowitz--Oberlin~\cite{HO1975} established independently that $D$ is bounded from $H^p(\mathbb{D}^2)$ to $A^p(\mathbb{D})$ for $0<p<\infty$. Theorem~\ref{thm:d2} refines this by showing that the norm of $D$ equals $1$ for $2< p<\infty$ and by identifying the extremal functions. The norm is no longer $1$ for $0<p<2$, since $f(z_1,z_2) = z_1+z_2$ satisfies
\[\frac{\|Df\|_{A^p(\mathbb{D})}^p}{\|f\|_{H^p(\mathbb{D}^2)}^p} = \frac{p\,\mathrm{B}(p/2,1/2)}{2+p},\]
which exceeds $1$ precisely when $0<p<2$ (see~\cite{BBHOCP2019}*{Lemma~21}).

Let us discuss the main idea in the proof of Theorem~\ref{thm:d2}. The motivation comes from the case $p=2$. Writing $f(z) = \sum_{\alpha} c_\alpha z^\alpha$ and following Rudin's proof, which applies the Cauchy--Schwarz inequality to the coefficients, we can express the defect $\|f\|_{H^2(\mathbb{D}^2)}^2 - \|Df\|_{A^2(\mathbb{D})}^2$ as a sum of nonnegative terms:
\begin{equation} \label{eq:2defect}
    \|f\|_{H^2(\mathbb{D}^2)}^2 = \|Df\|_{A^2(\mathbb{D})}^2 + \sum_{n=0}^{\infty} \left( \sum_{\lvert \alpha \rvert = n} \lvert c_\alpha \rvert^2
  - \frac{1}{n+1} \left\lvert \sum_{\lvert \alpha \rvert = n} c_\alpha \right\rvert^2 \right).
\end{equation}
It follows readily from \eqref{eq:2defect} that $\|f\|_{H^2(\mathbb{D}^2)} = \|Df\|_{A^2(\mathbb{D})}$ if and only if 
\begin{equation} \label{eq:p2ext}
    f(z_1,z_2) = \frac{a(z_1)-a(z_2)}{z_1-z_2},
\end{equation}
where $a'$ belongs to $A^2(\mathbb{D})$. Let $\partial_j$ and $\overline{\partial_j}$ stand for the Wirtinger derivatives with respect to the variables $z_j$ for $j=1,2$. The key object is the second-order differential operator 
\[\mathcal{L}u(z_1,z_2) \coloneq  \partial_1 \partial_2 \big((z_1-z_2) u(z_1,z_2)\big),\]
which annihilates $f$ of the form \eqref{eq:p2ext}. The idea is to express the defect term in \eqref{eq:2defect} as an integral of $\lvert\mathcal{L}f\rvert^2$ against a positive weight. It is this integral representation of the defect, rather than the inequality itself, that extends to $p>2$. Since
$\lvert \mathcal{L}f \rvert^2 = \mathcal{L} \overline{\mathcal{L}} \lvert f \rvert^2$ for
analytic functions $f$, we also introduce
$\Q \coloneq  \mathcal{L} \overline{\mathcal{L}} = \overline{\mathcal{L}}\mathcal{L}$. 

\begin{lemma} \label{lem:green2}
	If $u$ is $C^4$ on a neighborhood of the closed bidisc, then
	\[\int_{\mathbb{T}^2} u(z) \,dm_2(z) = \int_{\mathbb{D}} Du(w) \,d\mu(w) - 2\int_{\mathbb{D}^2} \log{\left\lvert\frac{z_1-z_2}{1-z_1 \overline{z_2}}\right\rvert} \Q u(z) \, d\mu(z_1)\,d\mu(z_2).\]
\end{lemma}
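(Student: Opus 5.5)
The plan is to recognise the logarithm as the Green's function of the disc and then apply Green's formula once in each variable. Set $G(z_1,z_2) \coloneq \log\lvert (z_1-z_2)/(1-z_1\overline{z_2})\rvert$. For fixed $z_2\in\mathbb{D}$ it vanishes on $\mathbb{T}$, satisfies $\Delta_1 G = 2\pi\delta_{z_2}$, and its outward normal derivative on $\mathbb{T}$ is the Poisson kernel $P(z_2,\zeta)=(1-\lvert z_2\rvert^2)/\lvert \zeta-z_2\rvert^2$. The first step is to factor $\Q$. Since $\mathcal{L}$ and $\overline{\mathcal{L}}$ involve commuting Wirtinger derivatives,
\[\Q u = \partial_1\overline{\partial_1}\,\partial_2\overline{\partial_2}\big(\lvert z_1-z_2\rvert^2 u\big) = \tfrac{1}{16}\,\Delta_1\Delta_2 v, \qquad v(z_1,z_2)\coloneq \lvert z_1-z_2\rvert^2u(z_1,z_2).\]
So the goal becomes computing $\int_{\mathbb{D}^2} G\,\Delta_1\Delta_2 v\,d\mu\,d\mu$.

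The second step integrates in $z_1$ first, with $z_2$ fixed. Apply Green's second identity to $G(\cdot,z_2)$ and $w\coloneq\Delta_2 v(\cdot,z_2)$, which is smooth up to the boundary because $u$ is $C^4$. With $d\mu = dA/\pi$ this gives
\[\int_{\mathbb{D}} G\,\Delta_1 w\,d\mu(z_1) = 2\Big(w(z_2,z_2) - \int_{\mathbb{T}} P(z_2,\zeta)\,w(\zeta,z_2)\,dm(\zeta)\Big).\]
On the diagonal, $\Delta_2(\lvert z_1-z_2\rvert^2u)$ evaluated at $z_1=z_2$ equals $4u(z_2,z_2)$, because every term containing a factor $z_1-z_2$ or its conjugate vanishes there. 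Integrating over $z_2$, the first term therefore contributes $8\int_{\mathbb{D}} Du\,d\mu$.

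The third step handles the remaining term $-2\int_{\mathbb{D}}\int_{\mathbb{T}} P(z_2,\zeta)\,\Delta_2 W_\zeta(z_2)\,dm(\zeta)\,d\mu(z_2)$, where $W_\zeta(z_2)\coloneq \lvert\zeta-z_2\rvert^2u(\zeta,z_2)$. Swap the order of integration using Fubini; this is legitimate since the integrand is $O(1)$ near $z_2=\zeta$. For fixed $\zeta\in\mathbb{T}$, apply Green's identity in $z_2$ to the harmonic function $P(\cdot,\zeta)$ and $W_\zeta$. The volume term $W_\zeta\Delta P$ vanishes. On $\mathbb{T}\setminus\{\zeta\}$ we have $P=0$ and $\partial_r P = -2/\lvert\zeta-\eta\rvert^2$. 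Hence the boundary term $-W_\zeta\,\partial_n P$ equals exactly $2u(\zeta,\eta)$, and
\[\int_{\mathbb{D}} P(z_2,\zeta)\,\Delta_2 W_\zeta\,d\mu(z_2) = 4\int_{\mathbb{T}} u(\zeta,\eta)\,dm(\eta).\]
Integrating over $\zeta$ gives $\int_{\mathbb{D}^2} G\,\Delta_1\Delta_2 v = 8\int_{\mathbb{D}}Du\,d\mu - 8\int_{\mathbb{T}^2}u\,dm_2$. Multiplying by $-2\cdot\tfrac1{16}$ yields exactly the identity in Lemma~\ref{lem:green2}.

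The main obstacle is justifying Green's formula in this last step, because $P(\cdot,\zeta)$ is singular at the boundary point $\zeta$. I would excise $\mathbb{D}\cap\{\lvert z_2-\zeta\rvert<\varepsilon\}$ and let $\varepsilon\to0$. On the small arc, $P=O(\varepsilon^{-1})$ and $\partial_n P=O(\varepsilon^{-2})$, while $W_\zeta=O(\varepsilon^2)$ and $\nabla W_\zeta=O(\varepsilon)$. The arc has length $O(\varepsilon)$, so the arc contributions are $O(\varepsilon)$ and disappear in the limit. The Fubini interchange needs the same kind of local bound, since $P\,\Delta_2W_\zeta$ stays integrable near $\zeta$. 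A similar but easier excision around the interior logarithmic singularity of $G$ justifies the second step.
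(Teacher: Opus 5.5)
Your proposal is correct and is essentially the paper's proof. It uses the same two applications of Green's formula: the Green representation formula in one variable with the pole at the other variable, and the boundary identity $\int_{\mathbb{T}}\varphi\,dm=\frac14\int_{\mathbb{D}}P(w,\zeta)\,\Delta(\lvert w-\zeta\rvert^2\varphi(w))\,d\mu(w)$, justified by excising a small arc at $\zeta$. The only differences are that you run the argument in the opposite direction and interchange the roles of $z_1$ and $z_2$.

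One small correction: $P(z_2,\zeta)\,\Delta_2 W_\zeta(z_2)$ is not $O(1)$ near $\zeta$, since it behaves like $4u(\zeta,\zeta)P(z_2,\zeta)$. It is, however, dominated by $C\,P(z_2,\zeta)$, and $\int_{\mathbb{T}}\int_{\mathbb{D}}P\,d\mu\,dm=1$, so Fubini still applies, as your later remark about integrability already suggests.
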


Notice that if we take $u=\lvert f\rvert^2$, then we can obtain \eqref{eq:2defect} from Lemma~\ref{lem:green2} by direct calculation. The proof of Lemma~\ref{lem:green2} amounts to two well-chosen applications of Green's formula. The presence of the Green's function of the disc
\[G(z_1,z_2) \coloneq \log{\left\lvert\frac{z_1-z_2}{1-z_1 \overline{z_2}}\right\rvert}\]
and the fact that $16 \Q u(z) = \Delta_1 \Delta_2 \big(\lvert z_1-z_2\rvert^2 u(z)\big)$ for the Laplacians $\Delta_j$ with respect to the variables $z_j$ for $j=1,2$ are strong hints in this direction.

In order to obtain Theorem~\ref{thm:d2} from Lemma~\ref{lem:green2}, the idea is to consider 
\[u = (\lvert f_r \rvert^2 + \varepsilon)^{p/2}\]
for $0<r<1$ and $\varepsilon>0$. We will show that the remainder term in Lemma~\ref{lem:green2} is nonnegative, before letting $\varepsilon \to 0^+$ and $r\to 1^-$. Here we will use both that $f$ is analytic and that $p/2\geq1$. If equality holds, then letting $\varepsilon \to 0^+$ and $r \to 1^-$ in the remainder term, we see that $\mathcal{L}(f^{p/2})$ vanishes in the open subset of $\mathbb{D}^2$ where $f$ does not vanish. A rigidity argument based on this shows that if $p>2$, then $f$ has no zeros in $\mathbb{D}^2$, and so $f^{p/2}$ is an extremal function for the case $p=2$. This is where the univalence of $a$ comes from, since \eqref{eq:p2ext} is nonvanishing if and only if $a$ is univalent.

\subsection*{Organization} We will prove Theorem~\ref{thm:d2} in Section~\ref{sec:proof}, and discuss its extension to the polydisc $\mathbb{D}^d$ in Section~\ref{sec:polydisc}. Some applications of our results are presented in Section~\ref{sec:applications}.

\subsection*{Acknowledgments} OFB was supported by Grant 354537 of the Research Council of Norway. JGS was partially supported by the National Science Foundation through Grants DMS-2247537, DMS-2434314, and DMS-2554957. JOC was supported by grant PID2024-160033NB-I00 of the Agencia Estatal de Investigación and by the 2024 ICREA 00142 grant of the Generalitat de Catalunya. KS was supported in part by Grant 334466 of the Research Council of Norway.

\subsection*{AI disclosure}
The AI work on this project began in June 2026, with the aim of proving \eqref{eq:mainbound}. A workflow combining GPT-5.6 Sol and Claude Fable~5 with regular input from the authors ran for about six weeks: it produced some new observations, but was mostly unsuccessful. In August 2026, using techniques similar to those in~\cite{BCOCS2024}, GPT-5.6 Sol established Corollary~\ref{cor:nikolskii}, which is equivalent to \eqref{eq:mainbound} for homogeneous polynomials. It also produced an incorrect proof of \eqref{eq:mainbound} based on decomposing a polynomial into its homogeneous parts. In September 2026, GPT-6 Astra proved \eqref{eq:mainbound} and \eqref{eq:maindbound} via Lemmas~\ref{lem:green2} and~\ref{lem:greend2}, and this proof forms the core of the present paper. It was then distilled into its current form through extensive and vigorous dialogue with Claude Fable~5.1 and Opus~5.5. The main contributions of this stage are simple proofs of Lemmas~\ref{lem:green2} and~\ref{lem:greend2} via Green's formula and the characterization of the equality cases in Theorems~\ref{thm:d2} and~\ref{thm:dgen}.

\section{Proof of \texorpdfstring{Theorem~\ref{thm:d2}}{Theorem 1.1}}
\label{sec:proof}
We begin with the proof of Lemma~\ref{lem:green2}. Recall that the Poisson kernel in $\mathbb{D}$ takes the form
\[P(w,\zeta) \coloneq \frac{1-\lvert w \rvert^2}{\lvert \zeta-w\rvert^2},\]
for $w$ in $\mathbb{D}$ and $\zeta$ on $\mathbb{T}$. As mentioned above, we need two versions of Green's formula in its proof.

\begin{lemma}[Green's formula] \label{lem:green1}
	Suppose that $\varphi$ is $C^2$ on a neighborhood of the closed unit disc.
	\begin{enumerate}
		\item[(a)] If $w$ is in $\mathbb{D}$, then
		\[\varphi(w) = \int_{\mathbb{T}} \varphi(\zeta)\, P(w,\zeta) \,dm(\zeta) + \frac{1}{2} \int_{\mathbb{D}} G(w,v)\, \Delta \varphi(v)\,d\mu(v).\]
		\item[(b)] If $\zeta$ is on $\mathbb{T}$, then
		\[\int_{\mathbb{T}} \varphi(\xi) \,dm(\xi) = \frac{1}{4} \int_{\mathbb{D}} P(w,\zeta)\, \Delta\big(\lvert w - \zeta \rvert^2 \varphi(w)\big) \,d\mu(w).\]
	\end{enumerate}
\end{lemma}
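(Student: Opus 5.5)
Part (a) is the classical Green representation formula for the disc. I will apply Green's second identity on $\mathbb{D}\setminus\overline{D(w,\delta)}$ to $\varphi$ and $v\mapsto G(w,v)$, which is harmonic there and vanishes on $\mathbb{T}$, and then let $\delta\to0^+$. The boundary term on $\mathbb{T}$ is $\int \varphi\,\partial_n G\,|d\zeta|$. A direct computation gives $\partial_n G(w,\cdot)=P(w,\zeta)$ on $\mathbb{T}$ with respect to arc length. The small circle contributes $-2\pi\varphi(w)$ in the limit, and its other piece vanishes since $\delta\log\delta\to 0$. The area integral is $\int G\,\Delta\varphi\,dA$.

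Now normalize: $dA=\pi\,d\mu$ and $|d\zeta| = 2\pi\,dm$. Dividing by $2\pi$ gives exactly
\[\varphi(w)=\int_{\mathbb{T}}\varphi P\,dm+\tfrac12\int_{\mathbb{D}}G\,\Delta\varphi\,d\mu.\]
The only care needed is with signs and constants; $G\le 0$ and $\Delta\varphi\ge0$ should be consistent with the sub-mean-value property, which serves as a check.

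For (b), fix $\zeta\in\mathbb{T}$ and put $\psi(w)=|w-\zeta|^2\varphi(w)$. The key observation is that
\[P(w,\zeta)\,\psi(w)=(1-|w|^2)\varphi(w),\]
so the singularity of the Poisson kernel at $\zeta$ cancels. I will therefore apply Green's second identity on $\mathbb{D}$ to the pair $\psi$ and $P(\cdot,\zeta)$. One cannot do this directly because $P(\cdot,\zeta)$ blows up at $\zeta$. Instead I will work on $r\mathbb{D}$ with $r<1$, where $P(\cdot,\zeta)$ is smooth and harmonic, and let $r\to1^-$. This gives
\[\int_{r\mathbb{D}}P\,\Delta\psi\,dA=\int_{r\mathbb{T}}\bigl(P\,\partial_n\psi-\psi\,\partial_n P\bigr)\,|dw|.\]

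On $r\mathbb{T}$, write $P\partial_n\psi-\psi\partial_nP$. The term $P\psi=(1-r^2)\varphi$ is smooth. Using $\partial_n\psi=\partial_r\bigl(P\psi/P\bigr)$, expand
\[P\partial_n\psi-\psi\partial_nP=\partial_r(P\psi)-2\psi\,\partial_rP.\]
The first term tends to $\partial_r\bigl((1-r^2)\varphi\bigr)|_{r=1}=-2\varphi$. For the second, compute $\partial_rP$ explicitly: $\psi\,\partial_rP=|w-\zeta|^2\varphi\,\partial_rP$, and $|w-\zeta|^2\partial_rP$ is bounded. On the circle it tends to $-2$ away from $\zeta$, since $\partial_r(1-r^2)=-2$ dominates; a short computation confirms $|w-\zeta|^2\partial_r P\to -2$ as $r\to 1$ for $w\neq\zeta$ rather than only being bounded. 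So the second term tends to $4\varphi$, and the total is $-2\varphi+4\varphi=2\varphi$ pointwise off $\zeta$, with a uniform bound. Dominated convergence then gives
\[\int_{\mathbb{D}}P\,\Delta\psi\,dA=\int_{\mathbb{T}}2\varphi\,|d\xi|=4\pi\int\varphi\,dm.\]
Converting $dA=\pi\,d\mu$ yields $\int P\,\Delta\psi\,d\mu=4\int\varphi\,dm$, which is (b).

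One must also check that $P\,\Delta\psi$ is integrable near $\zeta$. Since $\Delta\psi=4\varphi+O(|w-\zeta|)$, this requires $P\in L^1(\mathbb{D})$, which holds because $\int_{\mathbb{D}}P\,d\mu$ is finite; it equals $1$ by the mean value property on circles.

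The main obstacle is the boundary limit computation in (b). A clean alternative I would try first is the polar-coordinates route: integrate over circles $|w|=\rho$ using the mean value property $\int P(\rho\xi,\zeta)h(\rho\xi)\,dm(\xi)$ for harmonic $h$. This avoids delicate normal derivatives, but the Green's identity route above is my committed plan.
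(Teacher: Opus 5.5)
Your proof is correct. For (b) it follows the paper's route: Green's second identity applied to the harmonic function $P(\cdot,\zeta)$ and $\lvert w-\zeta\rvert^2\varphi(w)$, with the factor $\lvert w-\zeta\rvert^2$ cancelling the boundary singularity. For (a) the paper simply cites the classical formula, and your derivation, including signs and constants, is the standard one.

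The only difference is how you regularize near $\zeta$. The paper excises $\{\lvert w-\zeta\rvert\le\varepsilon\}$, so that on the rest of $\mathbb{T}$ one has $P=0$ and $\partial_n P=-2/\lvert w-\zeta\rvert^2$ exactly, and then only a short arc must be shown negligible. You instead exhaust by the smooth discs $r\mathbb{D}$ and pass to the limit using the identity $P\psi=(1-\lvert w\rvert^2)\varphi$, a uniform bound on $\lvert w-\zeta\rvert^2\partial_r P$, and dominated convergence. That bound does hold, since $\lvert w-\zeta\rvert^2\partial_rP=-2r-P\,\partial_r\lvert w-\zeta\rvert^2$ and the last term is bounded by $4$ for $r\ge 1/2$.
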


Part~(a) is the classical Green representation formula for the disc. Versions of~(b) can be found in the literature under additional assumptions. For $\varphi = \lvert g \rvert^2$ with $g$ analytic, it is the local Douglas formula of Richter and Sundberg~\cite{RS1991}*{Proposition~2.2} (see also~\cite{EKMR2014}*{Section~7.2}). A half-plane version for $\varphi = \lvert h \rvert^2$ with $h$ harmonic appears in~\cite{Garnett2007}*{Chapter~VI, Exercise~6}. For the convenience of the reader, we include a short proof.

\begin{proof}[Proof of Lemma~\ref{lem:green1}~\textup{(b)}]
	For $\varepsilon>0$, let $\mathbb{D}_\varepsilon \coloneq \{w \in \mathbb{D} : \lvert w-\zeta \rvert > \varepsilon\}$ and consider
	\[\int_{\mathbb{D}_\varepsilon} P(w,\zeta)\,\Delta\big(\lvert w-\zeta\rvert^2\varphi(w)\big)\,d\mu(w).\]
	Since $P(\cdot,\zeta)$ is harmonic in $\mathbb{D}$, Green's second identity expresses this integral as an integral over $\partial \mathbb{D}_\varepsilon$. On $\mathbb{T}\setminus\{\zeta\}$, the Poisson kernel vanishes and its outward normal derivative is $-2/\lvert w-\zeta\rvert^2$, so this part of $\partial\mathbb{D}_\varepsilon$ contributes
	\[4\int_{\lvert \xi-\zeta\rvert>\varepsilon} \varphi(\xi)\,dm(\xi).\]
	The remaining part of $\partial \mathbb{D}_\varepsilon$ is an arc of length at most $\pi\varepsilon$, and standard estimates show that its contribution tends to $0$ as $\varepsilon\to0^+$, owing to the factor $\lvert w-\zeta\rvert^2$. Letting $\varepsilon\to0^+$, we obtain~(b).
\end{proof}

\begin{proof}[Proof of Lemma~\ref{lem:green2}]
	Set $U(z) \coloneq \lvert z_1 - z_2 \rvert^2 u(z)$, so that $16 \Q u = \Delta_1 \Delta_2 U$. We begin by using Lemma~\ref{lem:green1}~(b) with $\varphi = u(\cdot,\zeta_2)$ and $\zeta=\zeta_2$, followed by Fubini's theorem to obtain
	\[\int_{\mathbb{T}}\int_\mathbb{T} u(\zeta_1,\zeta_2)\,dm(\zeta_1)\,dm(\zeta_2) = \frac{1}{4} \int_{\mathbb{D}} \int_{\mathbb{T}} \Delta_1 U(z_1,\zeta_2) P(z_1,\zeta_2)\,dm(\zeta_2)\,d\mu(z_1).\]
	The function $\Delta_1 U$ appearing in the integrand to the right is
	\[\Delta_1 U(z) = 4\big(u(z) + (z_1-z_2) \partial_1 u(z) + (\overline{z_1}-\overline{z_2}) \overline{\partial_1} u(z)\big) + \lvert z_1 - z_2 \rvert^2 \Delta_1 u(z),\]
	so that $\Delta_1 U(z_1,z_1) = 4Du(z_1)$. Using Lemma~\ref{lem:green1}~(a) with $\varphi = \Delta_1 U(z_1,\cdot)$ and $w=z_1$, we therefore find that 
	\[\frac{1}{4}\int_{\mathbb{T}} \Delta_1 U(z_1,\zeta_2) P(z_1,\zeta_2)\,dm(\zeta_2) = Du(z_1) - 2\int_{\mathbb{D}} G(z_1,z_2)\, \Q u(z) \,d\mu(z_2).\]
	The proof is completed by combining these two expressions.
\end{proof}

Before continuing with the proof of Theorem~\ref{thm:d2}, we will illustrate its use by establishing the corresponding result for $p=2$ from Lemma~\ref{lem:green2}. 

\begin{theorem}[Rudin~\cite{Rudin1969}*{Exercise~3.4.4(e)}] \label{thm:rudin}
	We have
	\[\|Df\|_{A^2(\mathbb{D})} \leq \|f\|_{H^2(\mathbb{D}^2)}\]
	for every $f$ in $H^2(\mathbb{D}^2)$, and equality holds in this bound if and
	only if
	\[f(z_1,z_2) = \frac{a(z_1)-a(z_2)}{z_1-z_2}\]
	for some analytic function $a$ in $\mathbb{D}$ such that $a'$ belongs to
	$A^2(\mathbb{D})$.
\end{theorem}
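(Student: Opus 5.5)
We will apply Lemma~\ref{lem:green2} to $u = \lvert f_r\rvert^2$ for $0<r<1$. Since $f_r$ is analytic on a neighborhood of the closed bidisc, $u$ is smooth there. Moreover $\Q u = \mathcal{L}\overline{\mathcal{L}}\lvert f_r\rvert^2 = \lvert \mathcal{L} f_r\rvert^2$, because $\mathcal{L}$ involves only holomorphic derivatives and multiplication by $z_1-z_2$. We also have $-G(z_1,z_2)\geq 0$ on $\mathbb{D}^2$. The lemma therefore gives
\[\int_{\mathbb{T}^2}\lvert f_r\rvert^2\,dm_2 = \int_{\mathbb{D}} \lvert Df_r\rvert^2\,d\mu + 2\int_{\mathbb{D}^2} \left(-G(z_1,z_2)\right) \lvert\mathcal{L}f_r(z)\rvert^2\,d\mu(z_1)\,d\mu(z_2),\]
and the last term is nonnegative. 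Hence $\|Df_r\|_{A^2(\mathbb{D})}\le \|f_r\|_{H^2(\mathbb{D}^2)}\le\|f\|_{H^2(\mathbb{D}^2)}$.

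Next we let $r\to1^-$. Note that $Df_r(w)=f(rw,rw)=(Df)(rw)$, so $\|Df_r\|_{A^2}$ increases to $\|Df\|_{A^2}$ by monotone convergence of integral means. This yields the inequality. For the defect, $\mathcal{L}f_r(z)=\partial_1\partial_2\big((z_1-z_2)f(rz)\big)$ converges locally uniformly to $\mathcal{L}f$. Fatou's lemma then gives
\[\|f\|_{H^2}^2-\|Df\|_{A^2}^2 \;\ge\; 2\int_{\mathbb{D}^2}(-G)\,\lvert\mathcal{L}f\rvert^2\,d\mu\,d\mu\;\ge 0.\]

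For the equality case, suppose the norms agree. Then $\mathcal{L}f\equiv0$ on $\mathbb{D}^2$, since $-G>0$ off the diagonal and $\mathcal{L}f$ is continuous. Set $F(z)=(z_1-z_2)f(z)$, so that $\partial_1\partial_2F=0$. Integrating in $z_2$ shows that $\partial_1 F$ depends only on $z_1$, and hence $F(z)=b(z_1)+c(z_2)$ for analytic $b,c$ on $\mathbb{D}$. Since $F$ vanishes on the diagonal, $c=-b$, so $f=(b(z_1)-b(z_2))/(z_1-z_2)$. Then $Df=b'$, and $b'\in A^2$ because $Df\in A^2$ by the inequality. Take $a=b$.

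For the converse, let $f$ be of this form with $a'\in A^2$, and write $a(z)=\sum a_n z^n$. Then
\[f=\sum_n a_n\sum_{j+k=n-1}z_1^jz_2^k,\]
so $\|f\|_{H^2}^2=\sum_n n\lvert a_n\rvert^2$, while $\|Df\|_{A^2}^2=\|a'\|_{A^2}^2=\sum_n n^2\lvert a_n\rvert^2/n=\sum_n n\lvert a_n\rvert^2$. The two agree, and this computation also shows $f\in H^2(\mathbb{D}^2)$.

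The main obstacle is the limiting step $r\to1^-$ in the equality case. Fatou's lemma only yields an inequality for the defect, but that inequality suffices: equality of the norms forces the weighted integral of $\lvert\mathcal{L}f\rvert^2$ to vanish.
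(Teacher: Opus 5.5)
Your proposal is correct and follows the paper's proof. Both apply Lemma~\ref{lem:green2} to $u=\lvert f_r\rvert^2$ using $\Q\lvert f_r\rvert^2=\lvert\mathcal{L}f_r\rvert^2$ and $G<0$, use Fatou's lemma to force $\mathcal{L}f\equiv0$ in the equality case, and then solve $\partial_1\partial_2\big((z_1-z_2)f\big)=0$. The only cosmetic difference is that you check the converse with the explicit Taylor-coefficient identity $\|f\|_{H^2(\mathbb{D}^2)}^2=\sum_n n\lvert a_n\rvert^2=\|a'\|_{A^2(\mathbb{D})}^2$, which the paper invokes without proof.
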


\begin{proof}
	Let $f$ be in $H^2(\mathbb{D}^2)$ and let $0<r<1$. Since $\Q \lvert f_r \rvert^2 = \lvert \mathcal{L}f_r \rvert^2$ and $G<0$ in $\mathbb{D}^2$, we can apply Lemma~\ref{lem:green2} with $u = \lvert f_r \rvert^2$ and let $r \to 1^-$ to obtain the stated inequality.

	Since $\mathcal{L}f_r(z) = r\mathcal{L}f(rz)$, Fatou's lemma shows that equality holds if and only if $\mathcal{L}f\equiv0$ in $\mathbb{D}^2$. Solving this equation, we find that
	\[(z_1-z_2) f(z) = a(z_1)+b(z_2),\]
	for analytic functions $a$ and $b$ in $\mathbb{D}$, and setting $z_1=z_2$, we get $a = -b$. Since $Df = a'$ and $\|f\|_{H^2(\mathbb{D}^2)} = \|a'\|_{A^2(\mathbb{D})}$, we find that $a'$ is in $A^2(\mathbb{D})$.
\end{proof}

In connection with the upcoming proof of Theorem~\ref{thm:d2}, we note that the extremal in Theorem~\ref{thm:rudin} does not vanish in $\mathbb{D}^2$ if and only if $a$ is univalent, in which case $a(\mathbb{D})$ has area $\pi\|a'\|_{A^2(\mathbb{D})}^2$. The relevance of this is that if we can establish that the extremals in Theorem~\ref{thm:d2} do not vanish in $\mathbb{D}^2$, then we can use Theorem~\ref{thm:rudin} to describe them. 

The proof of Theorem~\ref{thm:d2} requires two additional results. The first replaces $\Q \lvert f_r \rvert^2 = \lvert \mathcal{L}f_r \rvert^2 \geq 0$ from the proof of Theorem~\ref{thm:rudin}, while the second replaces the assertion that $f$ is extremal if and only if $\mathcal{L}f\equiv0$ from the proof of Theorem~\ref{thm:rudin}. The proof of the former is reduced to a long calculation, while the latter requires some finesse.

\begin{lemma} \label{lem:positivity}
	Fix $2<p<\infty$ and suppose that $f$ is analytic on a neighborhood of the
	closed bidisc. If $\varepsilon>0$, then
	\[\Q \big(\lvert f \rvert^2+\varepsilon\big)^{p/2} \geq 0.\]
\end{lemma}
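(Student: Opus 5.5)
The plan is a direct pointwise computation. We organize $\Q\big(\lvert f\rvert^2+\varepsilon\big)^{p/2}$ as a Hermitian quadratic form in a few holomorphic quantities built from $f$, and then check that its coefficient matrix is positive semidefinite whenever $s\coloneq p/2>1$. Throughout write $t\coloneq\lvert f\rvert^2+\varepsilon>0$, $u\coloneq t^s$, $w\coloneq z_1-z_2$, and $f_j\coloneq\partial_j f$, $f_{12}\coloneq\partial_1\partial_2 f$. We use $\mathcal{L}v=w\,\partial_1\partial_2v+\partial_2v-\partial_1v$ and its conjugate analogue for $\overline{\mathcal{L}}$. Since $f$ is analytic we have $\partial_j t=f_j\overline{f}$ and $\partial_1\partial_2 t=f_{12}\overline{f}$, and the antiholomorphic derivatives are the conjugates.

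\emph{Step 1 (first derivative).} Applying $\mathcal{L}$ to $u$ gives
\[\mathcal{L}u = s\,t^{s-2}\,\overline{f}\,\big(t\,\mathcal{L}f+(s-1)\,w\,\overline{f}\,f_1f_2\big),\]
and $\mathcal{L}f$ is holomorphic. Thus $\mathcal{L}u$ has the form $\sum_k c_k(t)\,\overline{f}^{\,k}H_k$ with $H_k$ holomorphic, namely $H_1=\mathcal{L}f$ and $H_2=wf_1f_2$.

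\emph{Step 2 (second derivative).} Next apply $\overline{\mathcal{L}}=\overline{w}\,\overline{\partial_1}\overline{\partial_2}+\overline{\partial_2}-\overline{\partial_1}$. Its derivatives fall only on the factors $t^{\alpha}$ and $\overline{f}^{\,k}$, since $\overline{\partial_j}H_k=0$. Using $\overline{\partial_j}t=f\,\overline{f_j}$ and $\overline{\partial_j}\,\overline{f}=\overline{f_j}$, the conjugate holomorphic quantities $\overline{\mathcal{L}f}$ and $\overline{w f_1 f_2}$ reappear. The outcome has the shape
\[\Q u = s\,t^{s-4}\,\Big(A\,t^2\lvert\mathcal{L}f\rvert^2+2\mre\big(B\,t\,\mathcal{L}f\,\overline{f}\,\overline{w f_1 f_2}\big)+C\,\lvert f\rvert^2\lvert w f_1f_2\rvert^2\Big),\]
possibly together with further terms coupling $f\,\overline{\mathcal{L}f}$ and $\overline{f}\,wf_1f_2$. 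Here $A,B,C$ are polynomials in $s$ and in $\lambda\coloneq\lvert f\rvert^2/t\in[0,1)$.

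The cleanest bookkeeping is to view everything as a Hermitian form in the vector $X=\big(t\,\mathcal{L}f,\ \overline{f}\,wf_1f_2\big)$, and possibly $f\,\mathcal{L}f$ if an extra entry is needed. Its $2\times2$ (or $3\times3$) coefficient matrix $M(s,\lambda)$ is then independent of the point. A consistency check is $s=1$, where the form must reduce to $t^{-2}\cdot t^2\lvert\mathcal{L}f\rvert^2$.

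\emph{Step 3 (positivity).} It remains to show that $M(s,\lambda)\succeq0$ for all $s>1$ and $\lambda\in[0,1)$. This amounts to checking that the diagonal entries are nonnegative and that the determinant (or the leading principal minors) is nonnegative. After factoring out powers of $(s-1)$ and $s$, I expect these to reduce to simple inequalities such as $(s-1)\big(1+(s-2)\lambda\big)\ge0$ combined with $1-\lambda\ge0$. If a $3\times3$ matrix arises, I would first try to complete the square in $t\,\mathcal{L}f$ to reduce it to a $2\times2$ problem.

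The main obstacle is Step 2 together with the positivity check in Step 3. The computation is long, and several cross terms must cancel or combine, in particular terms carrying $\overline{w}$ without a matching holomorphic partner. If a stray term such as $\overline{w}\,\lvert f_1\rvert^2$ fails to fit the form, I would enlarge $X$ to include $f_1\overline{f_2}$-type quantities. I would also double-check the identity by testing on $f=z_1+z_2$ and on $f=\big((a(z_1)-a(z_2))/(z_1-z_2)\big)^{1/s}$. For the latter, $\mathcal{L}(f^s)=0$, so the form should degenerate in the expected way as $\varepsilon\to0^+$.
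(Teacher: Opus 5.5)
Your strategy is the same as the paper's: write $\Q u$ as a Hermitian form in the holomorphic quantities $\mathcal{L}f$ and $wf_1f_2$, then check positive semidefiniteness. Your Step~1 is correct. But the proposal stops where the lemma's content begins. The coefficients are never computed, and Step~3 is only an expectation, so nothing is yet proved. In addition, the shape you anticipate in Step~2 is wrong in a way that would derail Step~3.

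Set $x=\lvert f\rvert^2$, $\varphi(x)=(x+\varepsilon)^s$, $A=(x\varphi')'$, $B=A'$ and $C=(xB)'$. The rule $\overline{\partial_j}\big(\Phi(x)\,\overline{f}^{\,k}\big)=(x\Phi'+k\Phi)\,\overline{f}^{\,k-1}\,\overline{f_j}$ then gives exactly
\[\Q u=A\,\lvert\mathcal{L}f\rvert^2+2\mre\big(f\,B\,\mathcal{L}f\,\overline{wf_1f_2}\big)+C\,\lvert wf_1f_2\rvert^2 .\]
No stray $\overline{w}$ terms appear, and no third entry is needed. Two features differ from your ansatz:
\begin{itemize}
\item The cross term carries $f$, not $\overline{f}$.
\item The last term has no factor $\lvert f\rvert^2$. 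It comes from $\overline{\partial_1}\,\overline{\partial_2}\,\overline{f}^{\,2}=2\,\overline{f_1f_2}$, and $C(0)=2s(s-1)\varepsilon^{s-2}>0$.
\end{itemize}
Consequently, in your coordinates $X=(t\,\mathcal{L}f,\ \overline{f}\,wf_1f_2)$ the coefficient of $\lvert X_2\rvert^2$ is $C/\lvert f\rvert^2$, which blows up like $1/\lambda$. At zeros of $f$ the form in $X$ misses the term $C\lvert wf_1f_2\rvert^2$ entirely. So there is no matrix $M(s,\lambda)$ polynomial in $\lambda$ of the kind you describe.

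What is needed is the form in $(\mathcal{L}f,\ wf_1f_2)$. Its matrix has diagonal entries $A$, $C$ and off-diagonal entries $fB$, $\overline{f}B$. Positivity then reduces to $A=s(x+\varepsilon)^{s-2}(sx+\varepsilon)>0$ together with the explicit identity
\[AC-xB^2=s^2(s-1)\,\varepsilon\,(x+\varepsilon)^{2s-6}\big(2\varepsilon^2+2s\varepsilon x+s(s-1)x^2\big)\geq0 .\]
Completing the square in $\mathcal{L}f$ then finishes the proof. This factorization is the heart of the lemma, and its factor $\varepsilon$ is consistent with the degeneration you expect as $\varepsilon\to0^+$. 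In your variables its last factor is $2+2(s-2)\lambda+(s-1)(s-2)\lambda^2$, not a multiple of the $1+(s-2)\lambda$ you guessed. So the guess cannot stand in for the computation.
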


\begin{proof}
	Set $q \coloneq p/2$, $\delta(z) \coloneq z_1-z_2$, $\varphi(x) \coloneq (x+\varepsilon)^q$ for $x\geq0$, and $u\coloneq\varphi(\lvert f \rvert^2)$. Since $f$ is analytic, we have $\partial_j \lvert f \rvert^2 = \overline{f}\,\partial_j f$ for $j=1,2$. Inserting this into $\mathcal{L}u = \delta\,\partial_1\partial_2 u - \partial_1 u + \partial_2 u$, we obtain 
	\begin{equation} \label{eq:chainrule}
		\mathcal{L}u = \varphi'\,\overline{f}\,\mathcal{L}f + \varphi''\,\overline{f}^2\,\delta\,\partial_1 f\,\partial_2 f.
	\end{equation}
	Here and below, functions of $x$ are evaluated at $x=\lvert f \rvert^2$ whenever they appear together with $f$. Since $\mathcal{L}f$ and $\delta\,\partial_1 f\,\partial_2 f$ are analytic, they behave as constants under $\overline{\mathcal{L}}$, so it remains to compute $\overline{\mathcal{L}}$ of $\varphi'\,\overline{f}$ and $\varphi''\,\overline{f}^2$. For this purpose, we introduce
	\[A_\varepsilon(x) \coloneq \left(x\varphi'(x)\right)', \quad B_\varepsilon(x) \coloneq A_\varepsilon'(x), \quad\text{and}\quad C_\varepsilon(x) \coloneq \left(xB_\varepsilon(x)\right)'.\]
	Using that $\overline{\partial_j}\lvert f \rvert^2 = f\,\overline{\partial_j f}$ for $j=1,2$, we find that
	\[\overline{\mathcal{L}}\big(\varphi'\,\overline{f}\big) = A_\varepsilon\,\overline{\mathcal{L}f} + fB_\varepsilon\,\overline{\delta\,\partial_1 f\,\partial_2 f}\quad\text{and}\quad
\overline{\mathcal{L}}\big(\varphi''\,\overline{f}^2\big) = \overline{f}B_\varepsilon\,\overline{\mathcal{L}f} + C_\varepsilon\,\overline{\delta\,\partial_1 f\,\partial_2 f}.\]
	Applying $\overline{\mathcal{L}}$ to \eqref{eq:chainrule}, we therefore obtain
	\begin{equation} \label{eq:quadratic}
		\Q u = A_\varepsilon \lvert \mathcal{L}f \rvert^2
		+ 2\mre(f B_\varepsilon\, \mathcal{L}f\,
		\overline{\delta\, \partial_1 f\, \partial_2 f})
		+ C_\varepsilon \lvert \delta\, \partial_1 f\, \partial_2 f\rvert^2.
	\end{equation}
	Since $q>1$, both $\varphi'$ and $\varphi''$ are positive, so $A_\varepsilon = \varphi'+x\varphi''>0$. Moreover, a direct computation shows that
	\[A_\varepsilon(x) C_\varepsilon(x) - x B_\varepsilon(x)^2 = q^2(q-1)\varepsilon(x+\varepsilon)^{2q-6}\left(2\varepsilon^2+2q\varepsilon x+q(q-1)x^2\right) \geq 0.\]
	Hence, completing the square in \eqref{eq:quadratic}, we get
	\[\Q u = A_\varepsilon  \left\lvert \mathcal{L}f
	+ \frac{\overline{f}B_\varepsilon }{A_\varepsilon }\, \delta\, \partial_1 f\, \partial_2 f
	\right\rvert^2 	
	+ \frac{A_\varepsilon C_\varepsilon  - \lvert f \rvert^2 B_\varepsilon^2}{A_\varepsilon }
	\lvert \delta\, \partial_1 f\, \partial_2 f\rvert^2 \geq 0. \qedhere\]
\end{proof}

The finesse is supplied by the following elementary one-variable statement.

\begin{lemma} \label{lem:onevar}
	Fix $\kappa>0$. If $\varphi$ and $\psi$ are analytic in $\mathbb{D}$ and
	\[\varphi \psi' + \kappa \varphi' \psi \equiv 0,\]
	then either $\varphi\psi \equiv 0$ or $\varphi\psi$ does not vanish in $\mathbb{D}$.
\end{lemma}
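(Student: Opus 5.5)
Our plan is to argue locally at a hypothetical zero and use the order of vanishing. Suppose $\varphi\psi\not\equiv 0$, so that neither $\varphi$ nor $\psi$ is identically zero, and suppose $\varphi\psi$ vanishes at some point $w_0\in\mathbb{D}$. Since both functions are analytic and not identically zero, near $w_0$ we may write
\[\varphi(w) = (w-w_0)^m \varphi_0(w), \qquad \psi(w) = (w-w_0)^n \psi_0(w),\]
with integers $m,n\geq 0$, $m+n\geq 1$, and $\varphi_0(w_0)\neq 0$, $\psi_0(w_0)\neq 0$.

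Next we insert these factorizations into the identity $\varphi\psi'+\kappa\varphi'\psi\equiv0$. The lowest-order term in $(w-w_0)$ is
\[(n+\kappa m)\,\varphi_0(w_0)\psi_0(w_0)\,(w-w_0)^{m+n-1}.\]
Every other contribution, coming from $\varphi_0\psi_0'$ and $\varphi_0'\psi_0$, carries the factor $(w-w_0)^{m+n}$. Since the left-hand side vanishes identically, the coefficient $(n+\kappa m)\varphi_0(w_0)\psi_0(w_0)$ must be zero. Because $\varphi_0(w_0)\psi_0(w_0)\neq0$, this forces $n+\kappa m=0$.

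Finally, $\kappa>0$ and $m,n\geq0$ give $m=n=0$, which contradicts $m+n\geq1$. So $\varphi\psi$ has no zeros in $\mathbb{D}$.

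An alternative formulation is available when $\varphi\psi$ is nonvanishing near a point. There the identity reads $(\log\psi)' = -\kappa(\log\varphi)'$, so locally $\psi = c\,\varphi^{-\kappa}$. A zero of $\varphi$ would then force $\psi$ to blow up, and a zero of $\psi$ would force $\varphi$ to blow up, contradicting analyticity. The order-of-vanishing argument above is cleaner, so we adopt it.

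There is no real obstacle here. The only point needing care is that both $\varphi$ and $\psi$ are not identically zero, so that the orders $m$ and $n$ are finite; this is exactly the assumption $\varphi\psi\not\equiv0$ in a connected domain. The positivity of $\kappa$ is what rules out the cancellation $n+\kappa m=0$.
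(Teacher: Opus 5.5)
Your proof is correct, but it takes a different route from the paper's.

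The paper uses a first integral. Away from the discrete set $\{\varphi\psi=0\}$, a local branch satisfies $(\varphi^\kappa\psi)'=\varphi^{\kappa-1}(\kappa\varphi'\psi+\varphi\psi')\equiv0$. So $\lvert\varphi\rvert^\kappa\lvert\psi\rvert$ is locally constant on the connected set $\mathbb{D}\setminus\{\varphi\psi=0\}$, hence constant there. Because $\kappa>0$ this function is continuous on all of $\mathbb{D}$, so it is a constant, and the constant is positive since $\varphi\psi\not\equiv0$. This is essentially the rigorous version of your ``alternative formulation''.

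Your main argument is instead a purely local indicial computation at a hypothetical zero: the coefficient $(n+\kappa m)\varphi_0(w_0)\psi_0(w_0)$ of $(w-w_0)^{m+n-1}$ cannot vanish. This route needs neither branches of $\varphi^\kappa$ nor any connectedness beyond the identity theorem. It also shows the conclusion holds for every complex $\kappa$ outside the nonpositive rationals, since $\kappa>0$ is used only to exclude $n+\kappa m=0$. The example $\varphi(w)=w^m$, $\psi(w)=w^n$ with $\kappa=-n/m$ shows this exceptional set is genuine.

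The paper's route, in exchange, exhibits the conserved quantity $\lvert\varphi\rvert^\kappa\lvert\psi\rvert$. This matches how the lemma is used in Lemma~\ref{lem:rigidity}, where $\varphi^\kappa\psi$ is a slice of $\partial_1\big((z_1-z_2)f^q\big)$ and its derivative is $\mathcal{L}(f^q)$.
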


\begin{proof}
	We may assume that $\varphi\psi\not\equiv0$. Near any point where $\varphi\psi \neq 0$, a branch of $\varphi^\kappa$ is analytic and
	\[(\varphi^\kappa \psi)' = \varphi^{\kappa-1}(\kappa \varphi'\psi + \varphi \psi') \equiv 0\]
	by assumption. Hence $\lvert \varphi \rvert^\kappa \lvert \psi \rvert$ is locally constant in the complement of the discrete set $\{\varphi\psi=0\}$, which is connected, so it is constant there. Since $\kappa>0$, the function $\lvert \varphi \rvert^\kappa \lvert \psi \rvert$ is continuous in $\mathbb{D}$ and therefore constant. This constant is positive since $\varphi\psi\not\equiv0$ by assumption.
\end{proof}

\begin{lemma} \label{lem:rigidity}
	Fix $q>1$. If $f$ is analytic in $\mathbb{D}^2$ and 
    \[\mathcal{L}(f^q) \equiv 0\] 
    in $\{f\neq0\}$ for every local branch of $f^q$, then either $Df \equiv 0$ or $f$ does not vanish in $\mathbb{D}^2$.
\end{lemma}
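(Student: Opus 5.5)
The plan is to turn the hypothesis into an analytic identity on all of $\mathbb{D}^2$, reduce it on one-variable slices to the equation in Lemma~\ref{lem:onevar}, and then read off the zeros of $f$ from the diagonal. We may assume $f\not\equiv0$, since otherwise $Df\equiv0$.

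\emph{Step 1: an analytic identity.} Write $\delta(z)=z_1-z_2$ and $\mathcal{L}u=\delta\,\partial_1\partial_2u-\partial_1u+\partial_2u$, as in the proof of Lemma~\ref{lem:positivity}. For a local branch of $f^q$ on $\{f\neq0\}$, the chain rule gives
\[\mathcal{L}(f^q)=q f^{q-2}\big(f\,\mathcal{L}f+(q-1)\,\delta\,\partial_1 f\,\partial_2 f\big).\]
So the hypothesis says that the analytic function $f\,\mathcal{L}f+(q-1)\delta\,\partial_1f\,\partial_2f$ vanishes on the nonempty open set $\{f\neq0\}$. By the identity theorem it vanishes on all of $\mathbb{D}^2$.

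\emph{Step 2: reduction to Lemma~\ref{lem:onevar} on slices.} Note that $\mathcal{L}f=\partial_1(\delta\,\partial_2f)-\partial_1f$. Introduce
\[g\coloneq\delta\,\partial_2 f-\frac{f}{q}.\]
Substituting into the identity of Step~1 and using $\partial_1\delta=1$ should give
\[f\,\partial_1 g+(q-1)\,\partial_1 f\,g\equiv0 \quad\text{in } \mathbb{D}^2.\]
This is a routine check; formally it says that $f^{q-1}g$ does not depend on $z_1$. Now fix $z_2=b\in\mathbb{D}$. Applying Lemma~\ref{lem:onevar} in the variable $z_1$, with $\varphi=f(\cdot,b)$, $\psi=g(\cdot,b)$ and $\kappa=q-1>0$, shows the following: either $fg(\cdot,b)\equiv0$, or $fg(\cdot,b)$ has no zeros in $\mathbb{D}$.

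At $z_1=b$ we have $\delta=0$, so $(fg)(b,b)=-Df(b)^2/q$. Suppose $Df\not\equiv0$. Then for every $b$ outside the discrete zero set $Z$ of $Df$, the first alternative is excluded, so $f(\cdot,b)$ has no zeros in $\mathbb{D}$. Hence the zero set $\{f=0\}$ lies in $\bigcup_{c\in Z}\{z_2=c\}$.

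\emph{Step 3: the symmetric argument and the conclusion.} Now interchange the roles of the variables. We use $\mathcal{L}f=\partial_2(\delta\,\partial_1f)+\partial_2f$ and $\tilde g\coloneq\delta\,\partial_1f+f/q$, which should give $f\,\partial_2\tilde g+(q-1)\,\partial_2f\,\tilde g\equiv0$. On the diagonal, $f\tilde g=Df^2/q$. The same argument then shows that $\{f=0\}\subseteq\bigcup_{c\in Z}\{z_1=c\}$.

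So $\{f=0\}$ is contained in the intersection of these two unions. That intersection is $Z\times Z$, which is a discrete set. On the other hand, the zero set of a holomorphic function of two variables that is not identically zero is either empty or an analytic set of pure dimension one; in particular it has no isolated points. Hence $\{f=0\}=\emptyset$, which proves the lemma.

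The main obstacle is Step~2: finding the right auxiliary function $g$. The shift by $-f/q$ is exactly what absorbs the extra $-\partial_1 f$ term in $\mathcal{L}f$ and produces the pure form $\varphi\psi'+\kappa\varphi'\psi=0$ required by Lemma~\ref{lem:onevar}. I would double-check this algebra first, together with the signs in the symmetric version. The dimension fact used in Step~3 is standard.
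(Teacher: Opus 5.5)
Your proof is correct and is essentially the paper's argument. Your $\tilde g=\delta\,\partial_1 f+f/q$ is exactly the paper's auxiliary function $g=f+q(z_1-z_2)\,\partial_1 f$ divided by $q$, and your Step~3 is its slice argument via Lemma~\ref{lem:onevar}, ending with the same fact that a nontrivial analytic function in $\mathbb{D}^2$ has no isolated zeros. The only difference is cosmetic: you derive the mirror-image inclusion in Step~2 explicitly, using $\partial_2\big((z_1-z_2)f^q\big)=q f^{q-1}g$, whereas the paper gets it by noting that $\mathcal{L}(f^q)$ merely changes sign when $z_1$ and $z_2$ are interchanged.
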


\begin{proof}
	Assume that $Df \not\equiv 0$ and set
	\[g(z) \coloneq  f(z) + q(z_1-z_2)\,\partial_1 f(z).\]
	In $\{f\neq0\}$, a direct computation shows that $\partial_1\big((z_1-z_2) f^q\big)= f^{q-1} g $ for any local branch of $f^q$. It follows that
	\[\mathcal{L}(f^q) = \partial_2 \big(f^{q-1} g\big) = f^{q-2} \big(f\, \partial_2 g + (q-1)\, g\, \partial_2 f\big).\]
	By assumption, the analytic function $f\, \partial_2 g + (q-1)\, g\, \partial_2 f$ therefore vanishes in the nonempty open set $\{f\neq0\}$, and hence identically in $\mathbb{D}^2$. 
    
    For fixed $z_1$ in $\mathbb{D}$, we may consequently apply Lemma~\ref{lem:onevar} with $\kappa = q-1$ to $\varphi(w) = f(z_1,w)$ and $\psi(w) = g(z_1,w)$. If $f(z)=0$, then $\varphi\psi$ vanishes at $w=z_2$, so Lemma~\ref{lem:onevar} shows that $\varphi\psi\equiv0$, and in particular 
    \[(Df(z_1))^2 = \varphi(z_1)\psi(z_1) = 0.\]
    As $\mathcal{L}(f^q)$ merely changes sign when the variables $z_1$ and $z_2$ are interchanged, the same argument shows that if $f(z)=0$, then $Df(z_2)=0$. The zero set of $Df$ is discrete since $Df\not\equiv0$, so it follows that the zero set of $f$ is discrete. However, a nontrivial analytic function in $\mathbb{D}^2$ has no isolated zeros (see e.g.~\cite{Rudin1969}*{Section~1.2.2}). Hence it follows that $f$ does not vanish in $\mathbb{D}^2$.
\end{proof}

\begin{proof}[Proof of Theorem~\ref{thm:d2}]
	Let $f$ be in $H^p(\mathbb{D}^2)$, $0<r<1$, and $\varepsilon>0$. Using Lemmas~\ref{lem:green2} and~\ref{lem:positivity} and letting $\varepsilon\to0^+$, we get  
	\[\|f_r\|_{H^p(\mathbb{D}^2)} \geq \|Df_r\|_{A^p(\mathbb{D})}.\]
	Letting $r \to 1^-$, we obtain \eqref{eq:mainbound}. Equality in \eqref{eq:mainbound} holds if and only if 	
	\[\lim_{r \to 1^-} \lim_{\varepsilon \to 0^+} \int_{\mathbb{D}^2} G(z_1,z_2)\,\Q\, (\lvert f_r(z) \rvert^2+\varepsilon)^{p/2}\,d\mu(z_1)\,d\mu(z_2) = 0.\]
	Suppose that this is the case and that $f \not\equiv 0$. Fix a point $z$ where $f_r(z)\neq0$ and let $F_r$ be a local branch of $f_r^{p/2}$ near $z$. Since $(\lvert f_r \rvert^2+\varepsilon)^{p/2}$ depends smoothly on $\varepsilon\geq0$ near $z$, we find that
	\[\lim_{\varepsilon \to0^+} \Q (\lvert f_r(z) \rvert^2+\varepsilon)^{p/2} = \Q \lvert F_r(z)\rvert^2 = \lvert \mathcal{L}F_r(z)\rvert^2,\]
	where the right-hand side does not depend on the choice of branch and will be denoted $\lvert \mathcal{L}(f_r^{p/2})(z)\rvert^2$. Since $G \leq 0$, two applications of Fatou's lemma therefore give that
	\[\int_{\{f\neq0\}} G(z_1,z_2) \, \lvert \mathcal{L}(f^{p/2})(z)\rvert^2\, d\mu(z_1)\,d\mu(z_2)=0.\]
	Since $G<0$ almost everywhere, it follows that $\mathcal{L}(f^{p/2})=0$ almost everywhere, and hence everywhere, in the open set $\{f\neq0\}$. As $\|Df\|_{A^p(\mathbb{D})} = \|f\|_{H^p(\mathbb{D}^2)} \neq 0$, Lemma~\ref{lem:rigidity} with $q=p/2$ shows that $f$ does not vanish in $\mathbb{D}^2$. It follows that $f^{p/2}$ is a well-defined analytic function in $\mathbb{D}^2$, which belongs to $H^2(\mathbb{D}^2)$ because $\lvert f^{p/2} \rvert^2 = \lvert f \rvert^p$. Consequently, $f$ attains equality in \eqref{eq:mainbound} if and only if $f^{p/2}$ attains equality in Theorem~\ref{thm:rudin}. The final assertion therefore follows from Theorem~\ref{thm:rudin} and the discussion following its proof.
\end{proof}

\section{Extension to the polydisc} \label{sec:polydisc}
The Hardy space $H^p(\mathbb{D}^d)$ is defined as above, with $m_d \coloneq  m\times\cdots\times m$ in place of $m_2$. For an integer $d\geq2$, let $\mu_d$ be the probability measure on $\mathbb{D}$ given by
\[d\mu_d(w) \coloneq  (d-1)(1-\lvert w \rvert^2)^{d-2}\,d\mu(w).\]
The weighted Bergman space $A^p_d(\mathbb{D})$ consists of the analytic functions $g$ in $\mathbb{D}$ such that
\[\|g\|_{A^p_d(\mathbb{D})}^p \coloneq  \int_{\mathbb{D}} \lvert g(w) \rvert^p\,d\mu_d(w) < \infty,\]
so that $A^p_2(\mathbb{D}) = A^p(\mathbb{D})$. We write $D_d f(w) \coloneq f(w,\ldots,w)$ for the diagonal restriction operator so that $D_2 = D$.

Let $a$ be an analytic function in $\mathbb{D}$ and $z$ a point in $\mathbb{D}^d$. The order $d-1$ divided difference of $a$ is defined as
\begin{equation} \label{eq:dd}
    a[z_1,\ldots,z_d] \coloneq  \frac{1}{2\pi i}\int_{\lvert w \rvert = \rho} \frac{a(w)}{(w-z_1)\cdots(w-z_d)}\,dw,
\end{equation}
where $\|z\|_{\ell^\infty} < \rho < 1$. It is a symmetric analytic function in $\mathbb{D}^d$ that satisfies $a[w,\ldots,w] = a^{(d-1)}(w)/(d-1)!$, and for $d=2$ we have
\[a[z_1,z_2] = \frac{a(z_1)-a(z_2)}{z_1-z_2}.\]
We begin with the case $p=2$, where we follow Rudin's original coefficient-based argument.

\begin{theorem} \label{thm:rudind}
	Suppose that $d\geq2$. We have
	\[\|D_d f\|_{A^2_d(\mathbb{D})} \leq \|f\|_{H^2(\mathbb{D}^d)}\]
	for every $f$ in $H^2(\mathbb{D}^d)$, and equality holds in this bound if and only if
	\[f(z) = a[z_1,\ldots,z_d]\]
	for some analytic function $a$ in $\mathbb{D}$ such that $a^{(d-1)}$ belongs to $A^2_d(\mathbb{D})$.
\end{theorem}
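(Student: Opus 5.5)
We follow Rudin's coefficient argument, as in \eqref{eq:2defect}. Write $f(z)=\sum_\alpha c_\alpha z^\alpha$ with multi-indices $\alpha\in\mathbb{N}_0^d$. Then $\|f\|_{H^2(\mathbb{D}^d)}^2=\sum_\alpha \lvert c_\alpha\rvert^2$ and
\[D_d f(w)=\sum_{n=0}^\infty \Big(\sum_{\lvert\alpha\rvert=n} c_\alpha\Big) w^n .\]
The monomials $w^n$ are orthogonal in $A^2_d(\mathbb{D})$. A beta-integral computation gives
\[\|w^n\|_{A^2_d(\mathbb{D})}^2=(d-1)\int_0^1 t^n(1-t)^{d-2}\,dt=\binom{n+d-1}{d-1}^{-1}.\]
The binomial coefficient $N_n\coloneq\binom{n+d-1}{d-1}$ is exactly the number of multi-indices $\alpha$ with $\lvert\alpha\rvert=n$. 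Hence
\[\|D_d f\|_{A^2_d(\mathbb{D})}^2=\sum_{n}\frac{1}{N_n}\Big\lvert\sum_{\lvert\alpha\rvert=n}c_\alpha\Big\rvert^2 .\]
The Cauchy--Schwarz inequality applied to each block $\lvert\alpha\rvert=n$ bounds this by $\sum_\alpha\lvert c_\alpha\rvert^2$. This gives the $d$-variable analogue of \eqref{eq:2defect}: the defect is a sum over $n$ of nonnegative Cauchy--Schwarz gaps. For general $f\in H^2(\mathbb{D}^d)$ this is all at the level of coefficients, so no approximation is needed beyond convergence of the series.

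For equality, we need every gap to vanish. This happens exactly when, for each $n$, the coefficients $c_\alpha$ with $\lvert\alpha\rvert=n$ are all equal to a common value $b_n$. We then identify $\sum_{\lvert\alpha\rvert=n} z^\alpha$ with the complete homogeneous symmetric polynomial $h_n(z_1,\dots,z_d)$. Expanding the Cauchy integral \eqref{eq:dd} for $a(w)=w^{m}$ gives $w^m[z_1,\dots,z_d]=h_{m-d+1}(z)$. This is the classical generating-function identity
\[\prod_j (w-z_j)^{-1}=\sum_k h_k(z)\, w^{-k-d},\]
valid for $\lvert w\rvert>\|z\|_{\ell^\infty}$. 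Thus $f(z)=\sum_n b_n h_n(z)=a[z_1,\dots,z_d]$ with
\[a(w)=\sum_n b_n w^{n+d-1}.\]
The interchange of sum and integral is justified because the series converges locally uniformly in $\mathbb{D}^d$ and in $\mathbb{D}$. The function $a$ is determined only up to a polynomial of degree less than $d-1$, which is harmless.

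Next we check that $a$ is analytic in $\mathbb{D}$ and satisfies the integrability condition. In the equality case,
\[D_d f=a^{(d-1)}/(d-1)!,\]
and $\|f\|_{H^2(\mathbb{D}^d)}=\|D_d f\|_{A^2_d(\mathbb{D})}$. Hence $f\in H^2(\mathbb{D}^d)$ forces $a^{(d-1)}\in A^2_d(\mathbb{D})$, and in particular the $b_n$ have controlled growth, so $a$ is analytic in $\mathbb{D}$. Conversely, suppose $a$ is analytic with $a^{(d-1)}\in A^2_d(\mathbb{D})$. Then $f=a[z_1,\dots,z_d]$ has constant coefficients $b_n$ on each block. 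Its $H^2(\mathbb{D}^d)$ norm is
\[\sum_n N_n\lvert b_n\rvert^2=\|a^{(d-1)}/(d-1)!\|_{A^2_d(\mathbb{D})}^2<\infty,\]
so $f$ lies in $H^2(\mathbb{D}^d)$ and attains equality. The normalization of $a$ in the statement should be read consistently with $a[w,\dots,w]=a^{(d-1)}(w)/(d-1)!$, absorbing the factorial into $a$.

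The main obstacle is only bookkeeping. The two points to get right are the weight computation $\|w^n\|_{A^2_d(\mathbb{D})}^2=N_n^{-1}$ and the identification of divided differences of monomials with complete homogeneous symmetric polynomials. Both are routine.
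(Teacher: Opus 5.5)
Your proposal is correct and is essentially the paper's proof: orthogonality with the weight identity $\|w^n\|_{A^2_d(\mathbb{D})}^2 = 1/\binom{n+d-1}{n}$, blockwise Cauchy--Schwarz with equality iff $c_\alpha$ depends only on $\lvert\alpha\rvert$, identification of $\sum_{\lvert\alpha\rvert=n}z^\alpha$ as the divided difference of $w^{n+d-1}$, and the norm identity $\sum_n \binom{n+d-1}{n}\lvert b_n\rvert^2 = \|D_d f\|_{A^2_d(\mathbb{D})}^2$. Your closing remark about absorbing the factorial into $a$ is unnecessary, because membership of $a^{(d-1)}$ in $A^2_d(\mathbb{D})$ is unaffected by constant multiples.
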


\begin{proof}
    The key observation is the identity
    \begin{equation} \label{eq:binom}
    \int_\mathbb{D} \lvert w \rvert^{2s}\,d\mu_d(w) = \frac{\Gamma(d)\,\Gamma(1+s)}{\Gamma(d+s)},
    \end{equation}
	valid for $s>-1$, which for $s=n$ equals $1/\binom{n+d-1}{n}$.
	Let $f(z) = \sum_\alpha c_\alpha z^\alpha$ be in $H^2(\mathbb{D}^d)$. By orthogonality and \eqref{eq:binom}, we have
	\[\|f\|_{H^2(\mathbb{D}^d)}^2 = \|D_d f\|_{A^2_d(\mathbb{D})}^2 + \sum_{n=0}^\infty \left(\sum_{\lvert \alpha \rvert = n} \lvert c_\alpha \rvert^2 - \frac{1}{\binom{n+d-1}{n}}\left\lvert \sum_{\lvert \alpha \rvert = n} c_\alpha \right\rvert^2\right).\]
	Since $\binom{n+d-1}{n}$ is also the number of multi-indices $\alpha$ in $\mathbb{N}_0^d$ with $\lvert \alpha \rvert = n$, the Cauchy--Schwarz inequality shows that every term in the infinite sum is nonnegative and vanishes if and only if $c_\alpha$ depends only on $\lvert \alpha \rvert$, say $c_\alpha = b_n$ for $\lvert \alpha \rvert = n$. Since $\sum_{\lvert \alpha \rvert = n} z^\alpha$ is the divided difference of $w\mapsto w^{n+d-1}$ by \eqref{eq:dd}, equality holds if and only if $f(z) = a[z_1,\ldots,z_d]$ with $a(w) = \sum_n b_n w^{n+d-1}$. Finally, for such $f$ we have $D_d f = a^{(d-1)}/(d-1)!$ and
	\[\|f\|_{H^2(\mathbb{D}^d)}^2 = \sum_{n=0}^\infty \binom{n+d-1}{n}\lvert b_n \rvert^2 = \|D_d f\|_{A^2_d(\mathbb{D})}^2\]
	by \eqref{eq:binom}. Hence $f$ belongs to $H^2(\mathbb{D}^d)$ if and only if $a^{(d-1)}$ is in $A^2_d(\mathbb{D})$.
\end{proof}

As in the case $d=2$, the description of the functions attaining equality in Theorem~\ref{thm:rudind} will be needed when we turn to $p>2$.

\begin{theorem} \label{thm:dgen}
	Suppose that $d\geq2$ and $2<p<\infty$. We have
	\begin{equation} \label{eq:maindbound}\|D_d f\|_{A^p_d(\mathbb{D})} \leq \|f\|_{H^p(\mathbb{D}^d)}\end{equation}
	for every $f$ in $H^p(\mathbb{D}^d)$, and equality holds in this bound if and only if either $f\equiv0$ or
	\[f(z) = \left(a[z_1,\ldots,z_d]\right)^{2/p},\]
	where $a$ is an analytic function in $\mathbb{D}$ such that $a^{(d-1)}$ belongs to $A^2_d(\mathbb{D})$ and such that $a[z_1,\ldots,z_d]$ does not vanish in $\mathbb{D}^d$.
\end{theorem}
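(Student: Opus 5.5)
The plan is to mirror the proof of Theorem~\ref{thm:d2}: first a Green-type identity on $\mathbb{D}^d$ with a nonpositive kernel, then positivity, then rigidity. The AI disclosure refers to a Lemma~\ref{lem:greend2}, which I take to be a $d$-variable analogue of Lemma~\ref{lem:green2}. I would obtain it by induction on $d$, peeling off one variable at a time.

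\textbf{Inductive identity.} Write $z=(z',z_d)$ and define the partial diagonal map $(z_1,\dots,z_{d-1})\mapsto(z_1,\dots,z_{d-1},z_{d-1})$. Applying Lemma~\ref{lem:green2} in the pair $(z_{d-1},z_d)$ and integrating over the remaining circles gives
\[
\int_{\mathbb{T}^d} u\,dm_d \;=\; \int_{\mathbb{T}^{d-2}}\int_{\mathbb{D}} u(\zeta,w,w)\,d\mu(w)\,dm_{d-2}(\zeta) \;+\; (\text{remainder}).
\]
The remainder is $-2\int G\,\Q_{d-1,d}u$, where $\Q_{d-1,d}$ denotes $\Q$ acting in the variables $(z_{d-1},z_d)$.

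The main term is not yet a torus integral in $d-1$ variables, so the induction must be set up for more general measures. I would prove a weighted version of Lemma~\ref{lem:green2}: for a pair of variables in which one is integrated against $\mu_k$ and the other against $m$, the result should be $\mu_{k+1}$ on the diagonal plus a remainder. The identity \eqref{eq:binom} suggests the weights $(1-|w|^2)^{k-2}$ combine exactly this way. The cleanest route is probably Lemma~\ref{lem:green1} with weighted analogues of the Green and Poisson kernels. Alternatively, one can use the radial structure: write $\mu_{k}$ as an average over dilated circles and apply the two-variable identity in a suitable rescaled form. In every case the remainder will be an integral of $K\cdot \Q_{ij}u$ with $K\le 0$, where $\Q_{ij}$ acts in one pair of variables after earlier pairs have been collapsed to the diagonal.

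\textbf{Positivity and the inequality.} Lemma~\ref{lem:positivity} is a pointwise statement in two variables with the other variables frozen. It therefore applies verbatim to $u=(|f_r|^2+\varepsilon)^{p/2}$ restricted to any two coordinates, and also after collapsing a group of variables to a common value, since the restriction is still analytic. Hence each remainder is nonnegative. Letting $\varepsilon\to0^+$ and then $r\to1^-$ yields \eqref{eq:maindbound}.

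\textbf{Equality and rigidity.} If equality holds, each remainder vanishes, so $\mathcal{L}_{ij}(f^{p/2})=0$ on $\{f\ne0\}$ for the relevant pairs. Lemma~\ref{lem:rigidity} is two-variable and must be iterated. I would use induction on $d$, with the following steps.

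\begin{enumerate}
\item Apply Lemma~\ref{lem:rigidity} to the slice in $(z_{d-1},z_d)$ with the other variables fixed. For each slice, either $f(\dots,w,w)\equiv0$, or $f$ has no zeros on that slice.
\item Equality forces $\|D_df\|\ne0$, so the partially collapsed function is nonvanishing by the inductive hypothesis.
\item Consequently, for generic fixed variables the first alternative fails, so $f$ is zero-free on those slices.
\item Extend to all of $\mathbb{D}^d$ by Hurwitz's theorem, or by the fact that zero sets of analytic functions have no isolated pieces of codimension greater than one.
\end{enumerate}

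Once $f$ is zero-free, $f^{p/2}\in H^2(\mathbb{D}^d)$ and attains equality in Theorem~\ref{thm:rudind}. This gives $f^{p/2}=a[z_1,\dots,z_d]$ with $a[\cdot]$ nonvanishing. Conversely, such $f$ attain equality, because $|f|^p=|f^{p/2}|^2$ and Theorem~\ref{thm:rudind} applies.

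\textbf{Main obstacle.} I expect the hardest step to be the weighted Green identity needed for the induction: producing $\mu_{k+1}$ exactly, with a sign-definite kernel, from $\mu_k\times m$.
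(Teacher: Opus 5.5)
Calling the weighted Green identity the crux is right. However, the form you predict for it is wrong, and so is the claim that Lemma~\ref{lem:positivity} applies verbatim. For the step from $\mu_{k-1}\times m$ to $\mu_k$ with $k\geq3$, no identity can hold whose remainder is $\int K\,\Q u$ with the \emph{same} operator $\Q$ and $K\leq0$.

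Expand $g=\sum c_{ij}z_1^iz_2^j$, with $z_1$ the $\mu_{k-1}$-variable. Using \eqref{eq:binom} and $\sum_{i=0}^n\binom{i+k-2}{i}=\binom{n+k-1}{n}$, the defect vanishes exactly when $c_{i,n-i}$ is proportional to $\binom{i+k-2}{i}$. These are the functions $g(z_1,z_2)=a[z_1,\ldots,z_1,z_2]$ with $z_1$ repeated $k-1$ times. For $k=3$, $g=2z_1+z_2$ gives the value $3$ on both sides, yet $\mathcal{L}g=-1$. Inserting $u=\lvert g\rvert^2$ into your identity would force $K=0$ almost everywhere, so the identity would assert that the defect always vanishes, which is false.

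The operator must be conjugated so that it annihilates these confluent divided differences. The paper uses $\mathcal{L}_k u=\partial_2\big((z_1-z_2)^{2-k}\partial_1((z_1-z_2)^{k-1}u)\big)=(z_1-z_2)\partial_1\partial_2u-\partial_1u+(k-1)\partial_2u$. This comes from Green's identity with $P(w,\zeta)^{k-1}$ and $\lvert w-\zeta\rvert^{2k-2}\varphi$ (Lemma~\ref{lem:greend1}), followed by Lemma~\ref{lem:green1}~(a). The result is the remainder $-\frac{2}{(k-1)^2}\iint G\,\Q_k u\,d\mu(z_2)\,d\mu_k(z_1)$ of Lemma~\ref{lem:greend2}. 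Positivity must then be rechecked for $\Q_k=\mathcal{L}_k\overline{\mathcal{L}_k}$. It survives because the computation in Lemma~\ref{lem:positivity} only uses that the operator has the form $(z_1-z_2)\partial_1\partial_2+c_1\partial_1+c_2\partial_2$.

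The rigidity argument also has a gap. In step~2 you apply the inductive hypothesis to $g(z_1,\ldots,z_{d-1})=f(z_1,\ldots,z_{d-1},z_{d-1})$. But $g$ is not an extremal for the $(d-1)$-variable theorem. After the first collapse, the chain runs through $\mu\times m_{d-2}$ and the operators $\mathcal{L}_k$, and you have no rigidity lemma for those.

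What steps~1 and~3 actually give is that the zeros of $f$ lie in $E\times\mathbb{D}^2$, where $E$ is the set of $\xi\in\mathbb{D}^{d-2}$ with $f(\xi,w,w)\equiv0$. Neither Hurwitz's theorem nor a codimension argument removes this, since $E\times\mathbb{D}^2$ can be a hypersurface such as $\{z_1=0\}$. Indeed, $f=z_1$ satisfies $\mathcal{L}(f^q)\equiv0$ in the pair $(z_{d-1},z_d)$ and $D_df\not\equiv0$, yet it vanishes. Information from a single pair cannot suffice. Your step~1 also needs the condition on every interior slice, whereas vanishing remainders only give it for almost every boundary slice.

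The paper needs no rigidity for the weighted steps at all:
\begin{enumerate}
\item It uses equality only in the first, unweighted collapse, which yields equality in Theorem~\ref{thm:d2} for almost every boundary slice $f_\zeta$.
\item It passes to interior slices through the analytic function $h=f\,\mathcal{L}f+(q-1)(z_1-z_2)\,\partial_1f\,\partial_2f$, which vanishes on almost every boundary slice and hence identically.
\item It uses that both norms are invariant under permuting the variables, so Lemma~\ref{lem:rigidity} applies to every pair.
\end{enumerate}
A zero of $f$ then propagates through the pairs $(z_1,z_2),(z_2,z_3),\ldots,(z_{d-1},z_d)$ to $D_df\equiv0$. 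This contradicts $\|D_df\|_{A^p_d(\mathbb{D})}=\|f\|_{H^p(\mathbb{D}^d)}\neq0$.
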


\begin{remark}
    For $d=2$, the requirement that $a[z_1,z_2]$ does not vanish in $\mathbb{D}^2$ means that $a$ is univalent. In general, $a[z_1,\ldots,z_d]$ is the leading coefficient of the polynomial of degree at most $d-1$ that interpolates $a$ at $z_1,\ldots,z_d$, in the sense of Hermite interpolation if some of the points coincide. Consequently, $a[z_1,\ldots,z_d]$ does not vanish in $\mathbb{D}^d$ if and only if $a-P$ has at most $d-1$ zeros in $\mathbb{D}$, counting multiplicity, for every polynomial $P$ of degree at most $d-2$.
\end{remark}

The proof of Theorem~\ref{thm:dgen} follows the proof of Theorem~\ref{thm:d2}, which is the case $d=2$, by restricting one variable at a time to the diagonal. The first restriction is handled by Theorem~\ref{thm:d2}, applied to two-variable slices. In the $k$th restriction, the $k-1$ variables already restricted are integrated against $\mu_{k-1}$ instead of over $\mathbb{T}$, so we need the following substitute for Lemma~\ref{lem:green1}~(b). 

\begin{lemma} \label{lem:greend1}
	Suppose that $k\geq3$ and that $\varphi$ is $C^2$ on a neighborhood of the closed unit disc. If $\zeta$ is on $\mathbb{T}$, then
	\[\int_{\mathbb{D}} \varphi(w)\,d\mu_{k-1}(w) = \frac{1}{4(k-1)} \int_{\mathbb{D}} P(w,\zeta)^{k-1}\,\Delta(\lvert w-\zeta \rvert^{2k-2}\varphi(w))\,d\mu(w).\]
\end{lemma}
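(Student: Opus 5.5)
This is Green's second identity on the domain $\mathbb{D}_\varepsilon=\{w\in\mathbb{D}:\lvert w-\zeta\rvert>\varepsilon\}$, as in the proof of Lemma~\ref{lem:green1}~(b), with $P(\cdot,\zeta)$ replaced by $h(w)\coloneq P(w,\zeta)^{k-1}$. This function is no longer harmonic, so an interior term appears. The plan is to show that this interior term is precisely the left-hand side, while the boundary terms either vanish or tend to zero.

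Set $\Phi(w)\coloneq\lvert w-\zeta\rvert^{2k-2}\varphi(w)$. On $\overline{\mathbb{D}_\varepsilon}$, Green's second identity gives
\[\int_{\mathbb{D}_\varepsilon} h\,\Delta\Phi\,dA=\int_{\mathbb{D}_\varepsilon}\Phi\,\Delta h\,dA+\int_{\partial\mathbb{D}_\varepsilon}\big(h\,\partial_n\Phi-\Phi\,\partial_n h\big)\,ds.\]
The first step is computing $\Delta h$. Writing $P=\mre\frac{\zeta+w}{\zeta-w}$, we have $\lvert\partial P\rvert^2=\frac{1}{4}\lvert\nabla P\rvert^2$ and
\[\Delta(P^{k-1})=(k-1)(k-2)P^{k-3}\lvert\nabla P\rvert^2.\]
Since $P$ is the real part of the analytic function $F(w)=\frac{\zeta+w}{\zeta-w}$, we get $\lvert\nabla P\rvert^2=\lvert F'\rvert^2=\frac{4}{\lvert\zeta-w\rvert^4}$. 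Therefore
\[\Phi\,\Delta h=4(k-1)(k-2)\,\varphi(w)\,\frac{(1-\lvert w\rvert^2)^{k-3}}{\lvert w-\zeta\rvert^{2k-6}}\cdot\frac{\lvert w-\zeta\rvert^{2k-2}}{\lvert w-\zeta\rvert^{4}}=4(k-1)(k-2)(1-\lvert w\rvert^2)^{k-3}\varphi(w).\]
All powers of $\lvert w-\zeta\rvert$ cancel. Switching to $d\mu=dA/\pi$ and using $d\mu_{k-1}=(k-2)(1-\lvert w\rvert^2)^{k-3}d\mu$, the interior term equals $4(k-1)\int_{\mathbb{D}_\varepsilon}\varphi\,d\mu_{k-1}$. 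This is exactly $4(k-1)$ times the desired left-hand side in the limit, with no $\zeta$-dependence. This explains the normalizations chosen in the statement.

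Next come the boundary terms on $\mathbb{T}\setminus\{\zeta\}$. Here $h$ vanishes to order $k-1\ge2$, so $h$ and $\partial_n h$ both vanish there and this part contributes nothing. This differs from the case $k=2$, where this part produced the boundary integral. The remaining part of $\partial\mathbb{D}_\varepsilon$ is the arc $\{\lvert w-\zeta\rvert=\varepsilon\}\cap\mathbb{D}$, of length at most $\pi\varepsilon$, and this is the main obstacle. On it we have $h\le(2/\varepsilon)^{k-1}$ and $\lvert\nabla h\rvert\lesssim\varepsilon^{-k}$, since $P\le 2/\lvert w-\zeta\rvert$ and $\lvert\nabla P\rvert=2/\lvert w-\zeta\rvert^2$. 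Meanwhile $\Phi=O(\varepsilon^{2k-2})$ and $\lvert\nabla\Phi\rvert=O(\varepsilon^{2k-3})$, because $\varphi$ is $C^1$ up to the boundary. The integrand is therefore $O(\varepsilon^{k-2})$, and after multiplying by the arc length the contribution is $O(\varepsilon^{k-1})\to0$.

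Finally, integrability on the whole disc must be checked so that we can let $\varepsilon\to0^+$. Expanding $\Delta\Phi$ shows it is $O(\lvert w-\zeta\rvert^{2k-4})$. Hence $h\,\Delta\Phi=O(\lvert w-\zeta\rvert^{k-3})$, using only $P\le2/\lvert w-\zeta\rvert$. This is integrable in the plane for every $k\ge3$, so dominated convergence applies on both sides and yields the lemma.
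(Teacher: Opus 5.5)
Your proposal is correct and follows the paper's route: Green's second identity for $P(\cdot,\zeta)^{k-1}$ and $\lvert w-\zeta\rvert^{2k-2}\varphi$. The interior term $\lvert w-\zeta\rvert^{2k-2}\Delta(P^{k-1})\,d\mu$ equals $4(k-1)\,d\mu_{k-1}$, the boundary terms vanish for $k\geq3$, and you make explicit the excision of a neighborhood of $\zeta$ and the integrability check that the paper leaves implicit.
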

\begin{proof} This follows from Green's second identity applied to $P(w,\zeta)^{k-1}$ and $\lvert w-\zeta \rvert^{2k-2}\varphi(w)$ as functions of $w$, since the boundary terms vanish for $k\geq3$ and a direct computation gives
\[\lvert w-\zeta \rvert^{2k-2}\,\Delta(P(w,\zeta)^{k-1})\,d\mu(w) = 4(k-1)\,d\mu_{k-1}(w). \qedhere \]
\end{proof}

For $k\geq2$, let
\begin{equation} \label{eq:Lkdef}
    \mathcal{L}_k u(z) \coloneq \partial_2 ((z_1-z_2)^{2-k}\,\partial_1 ((z_1-z_2)^{k-1} u(z)))
\end{equation}
and $\Q_k \coloneq \mathcal{L}_k\overline{\mathcal{L}_k} = \overline{\mathcal{L}_k}\mathcal{L}_k$, so that $\mathcal{L}_2 = \mathcal{L}$ and $\Q_2 = \Q$. Since holomorphic derivatives commute with multiplication by antiholomorphic functions and vice versa, we have
\[16\,\Q_k u(z) = \Delta_2(\lvert z_1-z_2 \rvert^{4-2k}\,\Delta_1(\lvert z_1-z_2 \rvert^{2k-2}u(z))).\]
Following the proof of Lemma~\ref{lem:green2}, but with Lemma~\ref{lem:green1}~(b) replaced by Lemma~\ref{lem:greend1}, we arrive at the following result. 

\begin{lemma} \label{lem:greend2}
	Suppose that $k\geq3$. If $u$ is $C^4$ on a neighborhood of the closed bidisc, then
	\begin{multline*}
        \int_{\mathbb{T}}\int_{\mathbb{D}} u(w,\zeta)\,d\mu_{k-1}(w)\,dm(\zeta) \\ = \int_{\mathbb{D}} Du(w)\,d\mu_{k}(w) - \frac{2}{(k-1)^2}\int_{\mathbb{D}}\int_{\mathbb{D}} G(z_1,z_2)\,\Q_{k} u(z)\,d\mu(z_2)\,d\mu_{k}(z_1).
    \end{multline*}
\end{lemma}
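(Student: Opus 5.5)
We mimic the proof of Lemma~\ref{lem:green2}, with Lemma~\ref{lem:greend1} in place of Lemma~\ref{lem:green1}~(b). Set
\[U(z) \coloneq \lvert z_1-z_2\rvert^{2k-2}u(z),\]
so that $16\,\Q_k u = \Delta_2\big(\lvert z_1-z_2\rvert^{4-2k}\Delta_1 U\big)$. For fixed $\zeta_2=\zeta$ on $\mathbb{T}$, apply Lemma~\ref{lem:greend1} with $\varphi = u(\cdot,\zeta)$. Integrating in $\zeta$ and using Fubini's theorem gives
\[\int_{\mathbb{T}}\int_{\mathbb{D}} u(w,\zeta)\,d\mu_{k-1}(w)\,dm(\zeta) = \frac{1}{4(k-1)}\int_{\mathbb{D}}\int_{\mathbb{T}} P(z_1,\zeta)^{k-1}\Delta_1U(z_1,\zeta)\,dm(\zeta)\,d\mu(z_1).\]
Since $P(z_1,\zeta)=(1-\lvert z_1\rvert^2)/\lvert z_1-\zeta\rvert^2$ on $\mathbb{T}$, we can write
\[P(z_1,\zeta)^{k-1}\Delta_1 U(z_1,\zeta) = (1-\lvert z_1\rvert^2)^{k-2}\,P(z_1,\zeta)\,V(z_1,\zeta),\qquad V(z)\coloneq \lvert z_1-z_2\rvert^{4-2k}\Delta_1U(z).\]
This identity holds for $z_2=\zeta\in\mathbb{T}$. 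The point of the factorization is that $V$ is exactly the function to which $\Delta_2$ is applied in the formula for $16\,\Q_k u$. The prefactor $(1-\lvert z_1\rvert^2)^{k-2}$ combines with $d\mu(z_1)$ and the constant to produce $d\mu_k(z_1)$, up to a constant.

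Next, for fixed $z_1\in\mathbb{D}$, apply Lemma~\ref{lem:green1}~(a) to $\varphi=V(z_1,\cdot)$ at $w=z_1$:
\[\int_{\mathbb{T}}V(z_1,\zeta)P(z_1,\zeta)\,dm(\zeta) = V(z_1,z_1) - \frac12\int_{\mathbb{D}}G(z_1,z_2)\,\Delta_2V(z)\,d\mu(z_2),\]
where $\Delta_2 V = 16\,\Q_k u$. To evaluate the diagonal value, expand $\Delta_1 U = 4\partial_1\overline{\partial_1}\big(\lvert\delta\rvert^{2k-2}u\big)$ with $\delta=z_1-z_2$. The term with both derivatives landing on the weight is $4(k-1)^2\lvert\delta\rvert^{2k-4}u$. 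The cross terms carry a factor $\lvert\delta\rvert^{2k-4}\delta$ or $\lvert\delta\rvert^{2k-4}\overline\delta$, and the last term carries $\lvert\delta\rvert^{2k-2}$. Multiplying by $\lvert\delta\rvert^{4-2k}$ gives
\[V = 4(k-1)^2u + O(\lvert\delta\rvert),\qquad\text{so}\qquad V(z_1,z_1)=4(k-1)^2Du(z_1).\]

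Finally, collect constants. With $d\mu_k = (k-1)(1-\lvert w\rvert^2)^{k-2}d\mu$, the main term is
\[\frac{1}{4(k-1)}\cdot 4(k-1)^2\cdot\frac{1}{k-1}\int_{\mathbb{D}} Du\,d\mu_k = \int_{\mathbb{D}} Du\,d\mu_k,\]
and the remainder coefficient is
\[\frac{1}{4(k-1)}\cdot\frac{1}{k-1}\cdot\frac{16}{2} = \frac{2}{(k-1)^2},\]
which matches the statement.

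The main obstacle is regularity. For $k\ge3$ the weight $\lvert\delta\rvert^{4-2k}$ is singular, so $V$ need not be $C^2$ near the diagonal, and Lemma~\ref{lem:green1}~(a) does not apply to it directly. The expansion above shows that $V$ is bounded and continuous, being $4(k-1)^2u$ plus terms of the form $(\overline\delta/\delta)\cdot\text{smooth}$ and $\delta\cdot\text{smooth}$, with the angular factor multiplied by $\lvert\delta\rvert$. Its Laplacian $16\,\Q_k u$ may however have singularities of order $\lvert\delta\rvert^{-1}$. I would handle this by applying Green's representation on $\mathbb{D}\setminus\{\lvert z_2-z_1\rvert<\eta\}$ and letting $\eta\to0^+$. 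The circle contribution involves $V$, which is bounded, times $\partial_n G\sim1/\eta$ over length $\eta$, which recovers the point value $V(z_1,z_1)$. It also involves $G\,\partial_nV$ with $\partial_n V=O(1/\eta)$, which gives $O(\eta\log\eta\cdot\eta^{-1})$, and this does not obviously vanish. That term needs care: I expect the angular averages of the cross terms to cancel by their $e^{\pm i\theta}$ dependence, leaving $O(\eta\log\eta)\to0$. Local integrability of $G\,\Q_k u$ then justifies the final Fubini step.
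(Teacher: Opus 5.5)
Your argument is the one the paper sketches. Your factorization $P(z_1,\zeta)^{k-1}\Delta_1U=(1-\lvert z_1\rvert^2)^{k-2}P(z_1,\zeta)\,V$ is the paper's identity $(k-1)P^{k-1}\,d\mu=P\lvert z_1-\zeta\rvert^{4-2k}\,d\mu_k$, the diagonal value $V(z_1,z_1)=4(k-1)^2Du(z_1)$ is its other stated ingredient, and your bookkeeping of the constants is correct.

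The regularity ``obstacle'' in your last paragraph does not exist, and your own expansion already shows this. In the cross terms and the last term the powers of $\lvert\delta\rvert$ cancel exactly, so
\[V = 4\big((k-1)^2u+(k-1)(\delta\,\partial_1u+\overline{\delta}\,\overline{\partial_1}u)+\lvert\delta\rvert^2\,\partial_1\overline{\partial_1}u\big),\]
with no $\overline{\delta}/\delta$ factors anywhere. Equivalently, $\mathcal{L}_ku=\delta\,\partial_1\partial_2u-\partial_1u+(k-1)\partial_2u$, so $\Q_k u$ is continuous on the closed bidisc. Hence $V(z_1,\cdot)$ is $C^2$ on a neighborhood of the closed disc, and Lemma~\ref{lem:green1}~(a) applies to it directly. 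You should drop the excision argument and the conjectured angular cancellation: as written, that step is left unproved, but it addresses a singularity that is not there.
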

The only new ingredients in the proof of this lemma are the identity
\[(k-1)P(z_1,\zeta_2)^{k-1}\,d\mu(z_1) = P(z_1,\zeta_2)\,\lvert z_1-\zeta_2 \rvert^{4-2k}\,d\mu_{k}(z_1)\]
and the fact that $\lvert z_1-z_2 \rvert^{4-2k}\,\Delta_1(\lvert z_1-z_2 \rvert^{2k-2}u(z))$ equals $4(k-1)^2 Du(z_1)$ when $z_2=z_1$. We omit the details.

The proof of Lemma~\ref{lem:positivity} only uses that $\mathcal{L}$ is of the form
\[\mathcal{L}u(z) = (z_1-z_2)\,\partial_1\partial_2 u(z) + c_1\,\partial_1 u(z) + c_2\,\partial_2 u(z)\] for some constants $c_1$ and $c_2$. Expanding \eqref{eq:Lkdef}, we see that $\mathcal{L}_{k}$ is of this form with $c_1=-1$ and $c_2 = k-1$, so the same proof yields the following result.

\begin{lemma} \label{lem:positivityd}
	Fix $2<p<\infty$ and $k\geq3$, and suppose that $f$ is analytic on a neighborhood of the closed bidisc. If $\varepsilon>0$, then
	\[\Q_{k} \big(\lvert f \rvert^2+\varepsilon\big)^{p/2} \geq 0.\]
\end{lemma}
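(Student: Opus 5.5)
The plan is to repeat the proof of Lemma~\ref{lem:positivity} verbatim, using only that $\mathcal{L}_k$ has the form
\[\mathcal{L}_k u = \delta\,\partial_1\partial_2 u + c_1\,\partial_1 u + c_2\,\partial_2 u, \qquad \delta(z) \coloneq z_1-z_2,\]
with $c_1=-1$ and $c_2=k-1$. First I would confirm this form by expanding \eqref{eq:Lkdef}. We have
\[\partial_1\big(\delta^{k-1}u\big) = (k-1)\delta^{k-2}u + \delta^{k-1}\partial_1 u,\]
so
\[\delta^{2-k}\partial_1\big(\delta^{k-1}u\big) = (k-1)u + \delta\,\partial_1 u.\]
Applying $\partial_2$ and using $\partial_2\delta=-1$ gives
\[\mathcal{L}_k u = (k-1)\partial_2 u - \partial_1 u + \delta\,\partial_1\partial_2 u,\]
as claimed. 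Note that $\mathcal{L}_k$ is a genuine differential operator with polynomial coefficients, so $\Q_k u$ makes sense wherever $u$ is $C^4$.

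Next, set $q=p/2$, $\varphi(x)=(x+\varepsilon)^q$ and $u=\varphi(\lvert f\rvert^2)$. Using $\partial_j\lvert f\rvert^2=\overline{f}\,\partial_j f$, the chain rule gives
\[\mathcal{L}_k u = \varphi'\,\overline{f}\,\mathcal{L}_k f + \varphi''\,\overline{f}^2\,\delta\,\partial_1 f\,\partial_2 f.\]
The first-order terms combine into $\varphi'\overline f(c_1\partial_1 f+c_2\partial_2 f)$. The only second-order cross term is $\varphi''\overline f^2\delta\,\partial_1f\,\partial_2f$, and the constants $c_j$ never enter it. So this is \eqref{eq:chainrule} with $\mathcal{L}$ replaced by $\mathcal{L}_k$.

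Now apply $\overline{\mathcal{L}_k}$, which has the same form with $\overline{\delta}$ and the conjugate derivatives. The factors $\mathcal{L}_k f$ and $\delta\,\partial_1f\,\partial_2f$ are analytic, so they pass through $\overline{\mathcal{L}_k}$ as constants. The formulas
\[\overline{\mathcal{L}_k}\big(\varphi'\,\overline f\big) = A_\varepsilon\,\overline{\mathcal{L}_k f} + fB_\varepsilon\,\overline{\delta\,\partial_1 f\,\partial_2 f}, \qquad \overline{\mathcal{L}_k}\big(\varphi''\,\overline f^2\big) = \overline f B_\varepsilon\,\overline{\mathcal{L}_k f} + C_\varepsilon\,\overline{\delta\,\partial_1 f\,\partial_2 f}\]
carry over with the same $A_\varepsilon,B_\varepsilon,C_\varepsilon$. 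In each, the coefficients $\overline{c_j}$ attach only to first derivatives of $\overline f$. These assemble exactly into $\overline{\mathcal{L}_k f}$, because $\overline{\partial_j}\,\psi(\lvert f\rvert^2)\,\overline{f}^{\,m}$ produces $\overline{\partial_j f}$ times a function of $\lvert f\rvert^2$ and $\overline f$ that does not depend on $j$.

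This is the one step needing care. I expect it to be the main (mild) obstacle: one must check that the $j$-independence really holds, so that the $c_1,c_2$ contributions combine into $\overline{\mathcal{L}_k f}$ with the same coefficient as the $\overline{\delta}\,\overline{\partial_1}\overline{\partial_2}$ contribution. In the $k=2$ case this bookkeeping is implicit, so I would redo it explicitly for general constants.

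With that in hand, we obtain the quadratic form \eqref{eq:quadratic} with $\mathcal{L}_k f$ in place of $\mathcal{L} f$. The inequalities $A_\varepsilon>0$ and
\[A_\varepsilon C_\varepsilon - xB_\varepsilon^2 \geq 0\]
concern only $\varphi$ and are unchanged. Completing the square then gives $\Q_k u\geq 0$.
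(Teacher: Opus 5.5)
Your proposal is correct and follows the paper's argument exactly. The paper likewise expands \eqref{eq:Lkdef} to get $\mathcal{L}_k u = \delta\,\partial_1\partial_2 u + c_1\partial_1 u + c_2\partial_2 u$ with $c_1=-1$ and $c_2=k-1$, notes that the proof of Lemma~\ref{lem:positivity} uses only this form, and concludes; your explicit check that the $c_j$-terms assemble into $\overline{\mathcal{L}_k f}$ with the same coefficients $A_\varepsilon$ and $\overline{f}B_\varepsilon$ is just the bookkeeping the paper leaves implicit.
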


\begin{proof}[Proof of Theorem~\ref{thm:dgen}]
	The case $d=2$ is Theorem~\ref{thm:d2}, so we suppose that $d\geq3$. Let $f$ be in $H^p(\mathbb{D}^d)$ and fix $0<r<1$. We set $I_1 \coloneq  \|f_r\|_{H^p(\mathbb{D}^d)}^p$ and
	\[I_k \coloneq  \int_{\mathbb{T}^{d-k}}\int_{\mathbb{D}} \lvert f_r(w,\ldots,w,\zeta) \rvert^p\,d\mu_k(w)\,dm_{d-k}(\zeta)\]
	for $2\leq k\leq d$, so that in particular $I_d = \|D_d f_r\|_{A^p_d(\mathbb{D})}^p$. We claim that $I_k\leq I_{k-1}$ for $2\leq k\leq d$. For $k=2$, this follows from Theorem~\ref{thm:d2} applied to $f_r(\cdot,\cdot,\zeta)$ for $\zeta$ on $\mathbb{T}^{d-2}$ and Fubini's theorem. For $3\leq k\leq d$, we fix $\zeta$ on $\mathbb{T}^{d-k}$ and use Lemma~\ref{lem:greend2} with
	\[u(z) = (\lvert f_r(z_1,\ldots,z_1,z_2,\zeta) \rvert^2+\varepsilon)^{p/2}\]
    for $\varepsilon>0$. Since $\Q_k u\geq0$ by Lemma~\ref{lem:positivityd} and $G\leq0$, letting $\varepsilon\to0^+$ and integrating $\zeta$ over $\mathbb{T}^{d-k}$, we obtain the claim. 	Hence
	\begin{equation} \label{eq:chain}
		\|D_d f_r\|_{A^p_d(\mathbb{D})}^p = I_d \leq I_{d-1} \leq \cdots \leq I_2 \leq I_1 = \|f_r\|_{H^p(\mathbb{D}^d)}^p,
	\end{equation}
	and letting $r\to1^-$, we arrive at \eqref{eq:maindbound}.

	Suppose next that equality holds in \eqref{eq:maindbound} and that $f\not\equiv0$, and set $q\coloneq p/2$. By \eqref{eq:chain}, we have $I_2\to\|f\|_{H^p(\mathbb{D}^d)}^p$ as $r\to1^-$. For $\zeta$ on $\mathbb{T}^{d-2}$, let $f_\zeta(z_1,z_2) \coloneq  f(z_1,z_2,\zeta)$, understood in terms of boundary values. Arguing as in the case $k=2$ above, but with subharmonicity, first in $\zeta$ and then in $w$, to pass from $f_r$ to $f$, we find that
	\[I_2 \leq \int_{\mathbb{T}^{d-2}} \|Df_\zeta\|_{A^p(\mathbb{D})}^p\,dm_{d-2}(\zeta) \leq \int_{\mathbb{T}^{d-2}} \|f_\zeta\|_{H^p(\mathbb{D}^2)}^p\,dm_{d-2}(\zeta) = \|f\|_{H^p(\mathbb{D}^d)}^p.\]
	Letting $r\to1^-$, we see that $f_\zeta$ attains equality in \eqref{eq:mainbound} of Theorem~\ref{thm:d2} for almost every $\zeta$ on $\mathbb{T}^{d-2}$. 
    Hence either $f_\zeta\equiv0$, or $f_\zeta$ does not vanish and $\mathcal{L}(f_\zeta^q)\equiv0$. Consider the analytic function
    \[h(z) \coloneq f(z)\,\mathcal{L}f(z) + (q-1)(z_1-z_2)\,\partial_1 f(z)\,\partial_2 f(z)\]
    in $\mathbb{D}^d$, with $\mathcal{L}$ acting on the variables $z_1$ and $z_2$. Let $h_\zeta$ be defined from $f_\zeta$ in the same way. Since $\mathcal{L}(f_\zeta^q) = q f_\zeta^{q-2} h_\zeta$, it follows that $h_\zeta$ vanishes identically for almost every $\zeta$ on $\mathbb{T}^{d-2}$. As $h_\zeta$ is given by the boundary values of $h$ in the last $d-2$ variables, we conclude that $h\equiv0$ in $\mathbb{D}^d$.

    Consequently, for every $\xi$ in $\mathbb{D}^{d-2}$, the function $(z_1,z_2)\mapsto f(z_1,z_2,\xi)$ satisfies the assumption of Lemma~\ref{lem:rigidity}. Hence, if $f(z)=0$, then 
    \[f(w,w,z_3,\ldots,z_d)=0\]
    for every $w$ in $\mathbb{D}$. The norms in Theorem~\ref{thm:dgen} are invariant under permutations of the variables, so the same holds with $z_1,z_2$ replaced by any two of the variables. Applying this successively to the pairs $(z_2,z_3),\ldots,(z_{d-1},z_d)$, each time setting both variables of the pair equal to $w$, we find that if $f$ vanishes somewhere in $\mathbb{D}^d$, then $D_d f\equiv0$. The proof is finished in the same way as the proof of Theorem~\ref{thm:d2}, by using Theorem~\ref{thm:rudind}.
\end{proof}

\section{Applications}\label{sec:applications}

\subsection{Point estimates for polynomials}
Theorem~\ref{thm:d2} has the following consequence for point evaluation of polynomials on the unit circle.

\begin{corollary} \label{cor:nikolskii}
	Suppose that $2 < p<\infty$. If $P(w) = \sum_{k=0}^{n} c_k w^{k}$, then
	\begin{equation} \label{eq:nikolskii}
		\|P\|_{L^\infty(\mathbb{T})} \leq \left(\frac{np}{2}+1\right)^{1/p} \|P\|_{L^p(\mathbb{T})}.
	\end{equation}
\end{corollary}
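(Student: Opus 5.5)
The plan is to encode $P$ as a homogeneous polynomial in two variables, whose diagonal restriction is a monomial times a single value of $P$, and then apply Theorem~\ref{thm:d2}.

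First reduce to the point $1$. Since $P$ is continuous on $\mathbb{T}$, its modulus attains its maximum at some $\zeta_0$ on $\mathbb{T}$. Replacing $P$ by $w\mapsto P(\zeta_0 w)$ changes neither $\|P\|_{L^\infty(\mathbb{T})}$ nor $\|P\|_{L^p(\mathbb{T})}$, because $m$ is rotation invariant, and it keeps the degree at most $n$. So we may assume $\|P\|_{L^\infty(\mathbb{T})} = \lvert P(1)\rvert$.

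Next define the homogeneous polynomial of degree $n$
\[f(z_1,z_2) \coloneq \sum_{k=0}^n c_k z_1^k z_2^{n-k},\]
which lies in $H^p(\mathbb{D}^2)$. For $(\zeta_1,\zeta_2)$ on $\mathbb{T}^2$ we have $f(\zeta_1,\zeta_2) = \zeta_2^n P(\zeta_1\overline{\zeta_2})$, hence $\lvert f(\zeta_1,\zeta_2)\rvert = \lvert P(\zeta_1\overline{\zeta_2})\rvert$. Integrating first in $\zeta_1$ and using rotation invariance gives
\[\|f\|_{H^p(\mathbb{D}^2)}^p = \|P\|_{L^p(\mathbb{T})}^p.\]
On the diagonal, $Df(w) = \big(\sum_k c_k\big) w^n = P(1)\,w^n$. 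By \eqref{eq:binom} with $d=2$ and $s = np/2$,
\[\|Df\|_{A^p(\mathbb{D})}^p = \lvert P(1)\rvert^p \int_{\mathbb{D}} \lvert w\rvert^{np}\,d\mu(w) = \frac{\lvert P(1)\rvert^p}{np/2+1}.\]

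Finally, Theorem~\ref{thm:d2} gives $\|Df\|_{A^p(\mathbb{D})}^p \le \|f\|_{H^p(\mathbb{D}^2)}^p$, that is,
\[\frac{\lvert P(1)\rvert^p}{np/2+1} \le \|P\|_{L^p(\mathbb{T})}^p.\]
Taking $p$th roots yields \eqref{eq:nikolskii}. There is no real obstacle here: once the homogenization $f$ is chosen, everything is routine, and the only computation is the radial moment of $\mu$.
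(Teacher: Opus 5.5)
Your proof is correct and follows the paper's argument essentially verbatim. You homogenize $P$ to $f(z_1,z_2)=\sum_{k} c_k z_1^k z_2^{n-k}$, observe that $\lvert f\rvert = \lvert P(\zeta_1\overline{\zeta_2})\rvert$ on $\mathbb{T}^2$ and $Df(w)=P(1)w^n$, and apply Theorem~\ref{thm:d2}; the only differences are cosmetic, in that you normalize to $\lvert P(1)\rvert$ by an explicit rotation and write out the radial moment, while the paper simply assumes $\|P\|_{L^\infty(\mathbb{T})}=P(1)$.
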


\begin{proof}
	Suppose that $\|P\|_{L^\infty(\mathbb{T})}=P(1)$. The $n$-homogeneous polynomial
	\[f(z_1,z_2) = \sum_{k=0}^{n} c_k\, z_1^{k} z_2^{n-k}\]
	satisfies $\lvert f(e^{i\theta_1},e^{i\theta_2})\rvert = \lvert P(e^{i(\theta_1-\theta_2)})\rvert$ and $Df(w) = P(1) w^{n}$, which implies that $\|f\|_{H^p(\mathbb{D}^2)} = \|P\|_{L^p(\mathbb{T})}$ and $\|Df\|_{A^p(\mathbb{D})} = \|P\|_{L^\infty(\mathbb{T})} (np/2+1)^{-1/p}$. Apply Theorem~\ref{thm:d2} to complete the proof.
\end{proof}

Corollary~\ref{cor:nikolskii} confirms a conjecture due to Instanes, who established \eqref{eq:nikolskii} for $2 \leq p \leq 4$ and $n\leq 4$ in~\cite{Instanes}*{Theorem~1}. Note that for $p\geq6.8$, the conjecture can be settled by much simpler means~\cite{Instanes}*{Theorem~2}, so the difficulty lies in the range of small $p>2$.

\begin{remark}
	The combination of Corollary~\ref{cor:nikolskii} with a result due to Levin and Lubinsky~\cite{LL2015}*{Theorem~2.1} shows that if $f$ is an entire function of exponential type at most $\pi$ whose restriction to $\mathbb{R}$ is in $L^p(\mathbb{R})$ for some $2<p<\infty$, then
	\[\|f\|_{L^\infty(\mathbb{R})} \leq \left(\frac{p}{2}\right)^{1/p} \|f\|_{L^p(\mathbb{R})}.\]
	A strict version of this estimate was established in~\cite{BCOCS2024}*{Theorem~1.1} by different techniques. On an autobiographical note, the work in~\cite{BCOCS2024} began after the intersection of its author list with that of the present paper had tried (and failed) to establish Theorem~\ref{thm:d2} and Corollary~\ref{cor:nikolskii}. In view of subsequent developments~\cite{BORS2026}, this failure may in hindsight be regarded as serendipitous.
\end{remark}

In the classical literature, such estimates are usually formulated in the equivalent setting of trigonometric polynomials. If $T(e^{i\theta}) = \sum_{k=-n}^{n} c_k e^{ik\theta}$, then $e^{in\theta}\,T(e^{i\theta})$ is an analytic polynomial of degree $2n$, so Corollary~\ref{cor:nikolskii} yields
\[\|T\|_{L^\infty(\mathbb{T})} \leq (np+1)^{1/p}\,\|T\|_{L^p(\mathbb{T})}.\]
Estimates of this type go back at least to Jackson~\cite{Jackson1933}. The power trick (see Ibragimov~\cite{Ibragimov1958} and Timan~\cite{Timan1994}*{Section~4.9.2}) yields
\[\|T\|_{L^\infty(\mathbb{T})} \leq \left(2n\left\lceil \frac{p}{2} \right\rceil+1\right)^{1/p}\,\|T\|_{L^p(\mathbb{T})},\]
and the two estimates coincide when $p$ is an even integer.

More generally, Nessel and Wilmes~\cite{NW1978}*{Theorem~2} showed that if $T$ is a trigonometric polynomial in $d$ variables, then 
\[\|T\|_{L^\infty(\mathbb{T}^d)}^p \leq N\,\|T\|_{L^p(\mathbb{T}^d)}^p,\]
where $N$ is the number of lattice points in $\lceil p/2\rceil$ times the convex hull of the spectrum of $T$. Suppose now that $2<p<\infty$. If the spectrum is contained in a box, then Corollary~\ref{cor:nikolskii}, applied in one variable at a time, shows that $\lceil p/2 \rceil$ may be replaced by $p/2$ in the resulting formula for $N$. If the spectrum is contained in the simplex $\{\alpha \in \mathbb{N}_0^d : \lvert \alpha \rvert \leq n\}$, then $N = \binom{d+n\lceil p/2 \rceil}{d}$. Using Theorem~\ref{thm:dgen} (with $d+1$ in place of $d$) as in the proof of Corollary~\ref{cor:nikolskii}, together with \eqref{eq:binom}, yields the same improvement in this case.

\subsection{Khinchin's inequality for Steinhaus variables}
If $Z_1,\ldots,Z_d$ are independent random variables, each uniformly distributed on $\mathbb{T}$, then 
\[\mathbb{E}\left\lvert \sum_{j=1}^d a_j Z_j \right\rvert^p = \left\|\sum_{j=1}^d a_j z_j\right\|_{H^p(\mathbb{D}^d)}^p.\]
The sharp Khinchin inequality for Steinhaus variables, due independently to Baernstein--Culverhouse~\cite{BC2002}*{Corollary~2} and K\"onig--Kwapie\'n~\cite{KK2001}*{Theorem~3}, asserts that if $2 < p < \infty$, then 
\[\left\|\sum_{j=1}^d a_j z_j\right\|_{H^p(\mathbb{D}^d)} \leq \Gamma(1+p/2)^{1/p} \|a\|_{\ell^2},\]
and the constant is optimal as $d\to\infty$. For fixed $d$, the optimal constant is attained when all coefficients are equal, so that
\begin{equation} \label{eq:Bpd}
    B_{p,d} \coloneq \sup_{\|a\|_{\ell^2}=1} \left\|\sum_{j=1}^d a_j z_j\right\|_{H^p(\mathbb{D}^d)} = \frac{\|z_1+\cdots+z_d\|_{H^p(\mathbb{D}^d)}}{\sqrt{d}}.
\end{equation}
This follows from~\cite{BC2002}*{Theorem~1}, which moreover shows that $B_{p,d}$ is increasing in $d$ (see also~\cite{KK2001}*{p.~122}). A different proof of the identity \eqref{eq:Bpd} is given in the proof of~\cite{BOCS2023}*{Theorem~5.2}, and~\cite{BOCS2023}*{Lemma~5.1} shows that $B_{p,d}$ is strictly increasing in $d$. In particular, $B_{p,d}^p<\Gamma(1+p/2)$ for every $d$, since $B_{p,d}^p \to \Gamma(1+p/2)$ as $d\to\infty$ by \eqref{eq:Bpd} and the central limit theorem. Theorem~\ref{thm:dgen} supplies a lower bound.

\begin{corollary} \label{cor:khinchin}
	If $2<p<\infty$ and $d\geq2$, then 
	\[\frac{d^{p/2}\,\Gamma(d)}{\Gamma(d+p/2)} < \frac{B_{p,d}^p}{\Gamma(1+p/2)}<1.\]
\end{corollary}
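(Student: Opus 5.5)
The upper bound $B_{p,d}^p<\Gamma(1+p/2)$ was already explained just before the statement: by \cite{BOCS2023}*{Lemma~5.1}, $B_{p,d}$ is strictly increasing in $d$, and $B_{p,d}^p\to\Gamma(1+p/2)$ as $d\to\infty$ by \eqref{eq:Bpd} and the central limit theorem. So only the lower bound needs proof, and I will obtain it from Theorem~\ref{thm:dgen} applied to $f(z)=z_1+\cdots+z_d$.

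By \eqref{eq:Bpd}, $B_{p,d}^p = d^{-p/2}\|z_1+\cdots+z_d\|_{H^p(\mathbb{D}^d)}^p$. For this $f$ we have $D_d f(w)=dw$, so \eqref{eq:binom} with $s=p/2$ gives
\[\|D_d f\|_{A^p_d(\mathbb{D})}^p = d^p\int_{\mathbb{D}}\lvert w\rvert^p\,d\mu_d(w) = d^p\,\frac{\Gamma(d)\,\Gamma(1+p/2)}{\Gamma(d+p/2)}.\]
Inserting this into \eqref{eq:maindbound} yields
\[B_{p,d}^p \geq d^{-p/2}\, d^p\,\frac{\Gamma(d)\Gamma(1+p/2)}{\Gamma(d+p/2)} = \frac{d^{p/2}\,\Gamma(d)\,\Gamma(1+p/2)}{\Gamma(d+p/2)},\]
which after dividing by $\Gamma(1+p/2)$ is the claimed left-hand inequality, but with $\leq$ in place of $<$.

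The one step needing care is strictness, which I would get from the equality characterization in Theorem~\ref{thm:dgen}. If equality held, then $f^{p/2}=a[z_1,\ldots,z_d]$ for some $a$, and $f=z_1+\cdots+z_d$ vanishes at $0$, while the theorem requires $a[z_1,\ldots,z_d]$ to be nonvanishing in $\mathbb{D}^d$. Since $f\not\equiv0$, this is a contradiction, so the inequality is strict.
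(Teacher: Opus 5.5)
Your proposal is correct and follows essentially the same route as the paper. The upper bound comes from the cited strict monotonicity, the lower bound from Theorem~\ref{thm:dgen} applied to $z_1+\cdots+z_d$ together with \eqref{eq:binom} at $s=p/2$, and strictness from the fact that this $f$ vanishes at the origin and so is not among the extremals.
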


\begin{proof}
	The upper bound follows from~\cite{BOCS2023}*{Lemma~5.1}, as explained above. For the lower bound, we apply Theorem~\ref{thm:dgen} to $f(z) \coloneq z_1+\cdots+z_d$, for which $\|f\|_{H^p(\mathbb{D}^d)}^p = d^{p/2}B_{p,d}^p$ and $D_d f(w) = dw$, so that
	\[\|D_d f\|_{A^p_d(\mathbb{D})}^p = \frac{d^p\,\Gamma(d)\,\Gamma(1+p/2)}{\Gamma(d+p/2)}\]
	by \eqref{eq:binom} with $s=p/2$. The inequality is strict, since $f$ vanishes at the origin and is therefore not one of the extremals in Theorem~\ref{thm:dgen}.
\end{proof}
The lower bound in Corollary~\ref{cor:khinchin} can also be obtained by averaging 
\[a \mapsto \left\|\sum_{j=1}^d a_j z_j\right\|_{H^p(\mathbb{D}^d)}^p\] over the unit sphere of $\mathbb{C}^d$. The two arguments give the same bound because $\mu_d$ is the distribution of $a_1$ when $a$ is uniformly distributed on the sphere (see~\cite{BBHOCP2019}*{Lemma~7}).

By Stirling's formula, the lower bound in Corollary~\ref{cor:khinchin} satisfies
\begin{align*}
    \frac{d^{p/2}\,\Gamma(d)}{\Gamma(d+p/2)} &= 1-\frac{p(p-2)}{8d}+O(d^{-2})
    \intertext{as $d\to\infty$. If $p$ is an even integer, then}
    \frac{B_{p,d}^p}{\Gamma(1+p/2)} &= 1-\frac{p(p-2)}{16d}+O(d^{-2}).
\end{align*}
For the second expansion, see~\cite{BNSW2011}*{Remark~2}, where it is also explained how it formally extends to general $p$.

\begin{remark}
	The proof of Khinchin's inequality in~\cite{BOCS2023}*{Section~5} relies on an explicit formula~\cite{BOCS2023}*{Theorem~1.2} for the Riesz projection of $\lvert\varphi\rvert^{p-2}\varphi$ when $\varphi(z)=\sum_{j=1}^d c_j z_j$. This formula is established in~\cite{BOCS2023} by a computation for even integers $p$ followed by an approximation argument. In the spirit of the present paper, we note that for $2 \leq p < \infty$ it is also a direct consequence of Green's formula. Indeed, the Riesz projection of $\lvert\varphi\rvert^{p-2}\varphi$ is a linear polynomial, whose coefficient of $z_j$ is
	\[\int_{\mathbb{T}^d} \lvert\varphi(\zeta)\rvert^{p-2}\varphi(\zeta)\,\overline{\zeta_j}\,dm_d(\zeta).\]
	Applying Green's formula in $\zeta_j$ and using that $\partial_j(\lvert\varphi\rvert^{p-2}\varphi) = \frac{p}{2}\,c_j\,\lvert\varphi\rvert^{p-2}$, we find that this coefficient equals
	\[\frac{p}{2}\,c_j \int_{\mathbb{T}^d}\int_{\mathbb{D}} \lvert\varphi(\zeta_1,\ldots,\zeta_{j-1},w,\zeta_{j+1},\ldots,\zeta_d)\rvert^{p-2}\,d\mu(w)\,dm_d(\zeta),\]
	which is the formula of~\cite{BOCS2023}*{Theorem~1.2}.
\end{remark}

\subsection{The adjoint of \texorpdfstring{$D$}{D} and an \texorpdfstring{$L^p$}{Lp} Douglas inequality}
For an analytic function $g$ in $\mathbb{D}$, let
\begin{equation} \label{eq:adjoint}
    D^\ast g(z) \coloneq \int_0^1 g((1-t)z_1+tz_2)\,dt,
\end{equation}
so that $D^\ast a' = a[z_1,z_2]$. The restriction of $D^\ast$ to $A^2(\mathbb{D})$ is the adjoint of $D$ from $H^2(\mathbb{D}^2)$ to $A^2(\mathbb{D})$. Since $D$ is bounded from $H^p(\mathbb{D}^2)$ to $A^p(\mathbb{D})$ for $0<p<\infty$ by \cites{DS1975,HO1975} and the Riesz projection is bounded on $L^p(\mathbb{T}^2)$ for $1<p<\infty$, a duality argument shows that
\[g \in A^p(\mathbb{D}) \qquad\iff\qquad D^\ast g \in H^p(\mathbb{D}^2)\]
for $1<p<\infty$. Since the boundary values of $D^\ast a'$ are $(a(\zeta_1)-a(\zeta_2))/(\zeta_1-\zeta_2)$, this equivalence is the familiar description of an analytic Besov space in terms of the derivative (see e.g.~\cite{Dyakonov1998}*{p.~143} and the references there).

If $2<p<\infty$, then Theorem~\ref{thm:d2} and the identity $g = DD^\ast g$ show that $\|g\|_{A^p(\mathbb{D})} \leq \|D^\ast g\|_{H^p(\mathbb{D}^2)}$. Writing $g=a'$, we may state this as follows.

\begin{corollary} \label{cor:boundary}
	Suppose that $2<p<\infty$ and that $a'$ is in $A^p(\mathbb{D})$. Then
	\begin{equation} \label{eq:boundary}
		\int_{\mathbb{D}} \lvert a'(w)\rvert^p\,d\mu(w) \leq \int_{\mathbb{T}^2} \left\lvert \frac{a(\zeta_1)-a(\zeta_2)}{\zeta_1-\zeta_2}\right\rvert^p dm_2(\zeta),
	\end{equation}
	and equality holds in this bound if and only if $a$ is affine.
\end{corollary}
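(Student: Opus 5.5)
The inequality \eqref{eq:boundary} will come from applying Theorem~\ref{thm:d2} to $f \coloneq D^\ast a' = a[z_1,z_2]$. Since $Df(w) = a[w,w] = a'(w)$, the left-hand side of \eqref{eq:boundary} is $\|Df\|_{A^p(\mathbb{D})}^p$.

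The first step is to check that $f$ belongs to $H^p(\mathbb{D}^2)$ and that its norm is given by the boundary integral. By the duality equivalence stated above, $a' \in A^p(\mathbb{D})$ implies $D^\ast a' \in H^p(\mathbb{D}^2)$. Its boundary values are $(a(\zeta_1)-a(\zeta_2))/(\zeta_1-\zeta_2)$, which requires that $a$ have boundary values on $\mathbb{T}$. I would justify this by noting that $a' \in A^p \subset A^2$ for $p>2$, so $a$ lies in the Dirichlet space and in particular in $H^2(\mathbb{D})$. The one-variable restrictions then converge appropriately, and the $H^p$ norm of $f$ is the $L^p(\mathbb{T}^2)$ norm of these boundary values. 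With this in place, Theorem~\ref{thm:d2} gives \eqref{eq:boundary} directly. If needed, the identification of boundary values can be done first for the dilation $a_r$ and then passed to the limit as $r \to 1^-$.

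For the equality case, Theorem~\ref{thm:d2} forces either $f\equiv 0$ or $f = (b[z_1,z_2])^{2/p}$ with $b$ univalent and $b(\mathbb{D})$ of finite area. If $f \equiv 0$, then $a$ is constant, hence affine. Otherwise we must have
\[\frac{a(z_1)-a(z_2)}{z_1-z_2} = \left(\frac{b(z_1)-b(z_2)}{z_1-z_2}\right)^{2/p}.\]
The key observation is that both $a$ and $b$ are recovered from this same function $f$. Setting $z_1=z_2$ gives $a' = (b')^{2/p}$. Writing $F = f^{p/2}$, Theorem~\ref{thm:rudin} forces $\mathcal{L}F = 0$, and by Lemma~\ref{lem:rigidity}'s computation we also have $\mathcal{L}f = 0$, since $f = D^\ast a'$ is itself of Rudin extremal form. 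So $\mathcal{L}f = 0$ and $\mathcal{L}(f^{q}) = 0$ hold simultaneously with $q = p/2 > 1$.

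From the formula in the proof of Lemma~\ref{lem:rigidity}, $\mathcal{L}(f^q) = q f^{q-2}\big(f\mathcal{L}f + (q-1)(z_1-z_2)\partial_1 f\,\partial_2 f\big)$. Combined with $\mathcal{L}f = 0$, this yields $(z_1-z_2)\,\partial_1 f\,\partial_2 f \equiv 0$, so $\partial_1 f \equiv 0$ or $\partial_2 f\equiv 0$. Since $f$ is symmetric, either condition gives that $f$ is constant. Then $a(z_1)-a(z_2) = c(z_1-z_2)$, so $a$ is affine. Conversely, if $a$ is affine, both sides of \eqref{eq:boundary} equal $\lvert c\rvert^p$, so equality holds.

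I expect the main obstacle to be the justification in the first step that $\|D^\ast a'\|_{H^p}$ equals the boundary integral in \eqref{eq:boundary}. The equality argument itself is a short algebraic consequence of the two annihilation conditions.
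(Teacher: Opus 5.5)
Your proposal is correct and follows the paper's proof: the inequality is Theorem~\ref{thm:d2} applied to $f = D^\ast a'$ using $Df = a'$. For the equality case, you put $\mathcal{L}f \equiv 0$ (since $f$ is a divided difference) and $\mathcal{L}(f^{q}) \equiv 0$ (from Theorem~\ref{thm:d2}) into the identity $\mathcal{L}(f^q) = q f^{q-2}\big(f\,\mathcal{L}f + (q-1)(z_1-z_2)\,\partial_1 f\,\partial_2 f\big)$ and conclude by symmetry, exactly as the paper does; your extra justification of the boundary-value identification, via $a' \in A^2(\mathbb{D})$ so that $D^\ast a' \in H^2(\mathbb{D}^2)$, covers a point the paper takes for granted, while the step $a' = (b')^{2/p}$ is harmless but unused.
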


\begin{proof}
	The inequality was established above. If $a$ is affine, then $D^\ast a'$ is constant and equality holds trivially. Conversely, suppose that equality holds and that $a$ is not constant, and set $f \coloneq D^\ast a'$ and $q \coloneq p/2$. By Theorem~\ref{thm:d2}, $f$ does not vanish in $\mathbb{D}^2$ and $f^q$ is a divided difference, so $\mathcal{L}(f^q) \equiv 0$. Since $f$ is itself a divided difference, we also have $\mathcal{L}f\equiv0$, and the identity
	\[\mathcal{L}(f^q) = q f^{q-2}(f\,\mathcal{L}f + (q-1)(z_1-z_2)\,\partial_1 f\,\partial_2 f)\]
	shows that $\partial_1 f\,\partial_2 f \equiv 0$. As $f$ is symmetric, this means that $\partial_1 f \equiv 0$, so $f$ is constant and $a$ is affine.
\end{proof}

If $g(w) = 2w$, then $D^\ast g(z) = z_1+z_2$, and the computation in the introduction shows that \eqref{eq:boundary} fails for every $0<p<2$. If $p=2$, then Theorem~\ref{thm:rudin} shows that $D^\ast$ is an isometry from $A^2(\mathbb{D})$ to $H^2(\mathbb{D}^2)$, so equality holds in \eqref{eq:boundary} for every $a$ with $a'$ in $A^2(\mathbb{D})$. This is the classical Douglas formula~\cite{Douglas1931} (see also~\cite{EKMR2014}*{Theorem~1.5.1}).

\begin{remark}
   Since $DD^\ast$ is the identity, $D^\ast D$ is the orthogonal projection of $H^2(\mathbb{D}^2)$ onto the subspace of divided differences. It is plain from the definition of $D^\ast$ in \eqref{eq:adjoint} that $D^\ast D$ is also contractive on $H^\infty(\mathbb{D}^2)$. However, it is not contractive on $H^p(\mathbb{D}^2)$ for any $2<p<\infty$. Indeed, if $f$ attains equality in \eqref{eq:mainbound} and $Df$ is not constant, for example $f(z) = (2+z_1+z_2)^{2/p}$, then Corollary~\ref{cor:boundary} yields $\|D^\ast D f\|_{H^p(\mathbb{D}^2)} > \|Df\|_{A^p(\mathbb{D})} = \|f\|_{H^p(\mathbb{D}^2)}$. 
\end{remark}

\bibliography{rudindiag}

\end{document}